\newcommand{\set}[1]{\left\{#1\right\}}
\newcommand{\Real}{\operatorname{Re}}
\newcommand{\Img}{\operatorname{Im}}
\newcommand{\norm}[1]{\left\Vert #1 \right\Vert}
\newcommand{\lam}{\lambda}
\newcommand{\spn}{\operatorname{span}}
\newcommand{\B}{{\mathbb B}}
\newcommand{\R}{{\mathbb R}}
\newcommand{\C}{{\mathbb C}}
\newcommand{\F}{{\mathbb F}}
\newcommand{\Sph}{{\mathbb S}}
\newcommand{\Cnxn}{{\C^{n\times n}}}
\newcommand{\Ctxt}{{\C^{2\times 2}}}
\newcommand{\bmat}[1]{ \begin{bmatrix}#1\end{bmatrix}}
\newcommand{\smat}[1]{ \left[\begin{smallmatrix} #1 \end{smallmatrix}\right]}
\newcommand{\cont}{{\mathcal C}}
\newcommand{\diag}{\operatorname{diag}}
\newcommand{\rank}{\operatorname{rank}}
\newcommand{\discr}{\operatorname{discr}}
\newcommand{\Rn}{{\R^n}}
\newcommand{\Rp}{{\R^p}}
\newcommand{\Cn}{{\C^n}}
\newcommand{\eps}{\varepsilon}
\newcommand{\ddt}{\frac{d}{dt}}
\newcommand{\Cnn}{{\C^{n \times n}}}
\def \Cmn {{\C^{m\times n}}}
\newcommand{\abs}[1]{\left| #1 \right|}
\renewcommand{\ss}{\scriptstyle}
\newcommand{\Cov}{\left\{ X_s \right\}_{s\in[0,1]}}
\def\sddots{\mathinner{\raise3pt\vbox{\hbox{$\ss .$}}
		\raise1.5pt\hbox{$\ss .$}\hbox{$\ss .$}}}
\let\hat\widehat
\theoremstyle{plain}
\newtheorem{thm}{Theorem}[section]
\newtheorem{lem}[thm]{Lemma}
\newtheorem{cor}[thm]{Corollary}
\newtheorem{rem}[thm]{Remark}
\newtheorem{rems}[thm]{Remarks}
\newtheorem{defn}[thm]{Definition}
\newtheorem{exm}[thm]{Example}
\begin{document}

\title{SVD, joint-MVD, Berry phase, and generic loss of rank\\
		for a matrix valued function of 2 parameters}
	\thanks{We gratefully acknowledge the hospitality of the School of Mathematics
		of Georgia Tech and  the support provided by the University of Bari and by
		INdAM-GNCS}

\author[Dieci]{Luca Dieci}
\address{School of Mathematics, Georgia Institute of Technology,
Atlanta, GA 30332 U.S.A.}
\email{dieci@math.gatech.edu}
\author[Pugliese]{Alessandro Pugliese}
\address{Dept. of Mathematics, University of Bari ``Aldo Moro'', Via Orabona 4, 70125 Bari, Italy}
\email{alessandro.pugliese@uniba.it}
\subjclass{15A18, 15A23, 15A99}

\null\hfill {\fontsize{10}{10pt} \selectfont 
	Version of \today }

\keywords{Matrices depending on parameters, loss of rank, minimum variation 
decomposition}

\begin{abstract}
In this work we consider generic losses of rank for complex valued matrix
functions depending on two parameters.  We give theoretical results that
characterize parameter regions where these losses of rank occur.
Our main results consist in showing how following
an appropriate smooth SVD along a closed loop it is possible to
monitor the Berry phases accrued by the singular vectors to decide if --inside the
loop-- there are parameter values where a loss of rank takes place.
It will be needed to use a new construction of a smooth SVD, which we call
the ``joint-MVD'' (minimum variation decomposition).  
\end{abstract}

\maketitle

\pagestyle{myheadings}
\thispagestyle{plain}
\markboth{L.~Dieci, A.~Pugliese.}{Generic loss of rank}

{\bf Notation.} We indicate with $\Omega$ an open and simply connected subset of 
$\R^2$ or $\R^3$. For points in $\Omega$,
the symbol $\xi$ will indicate $\xi=(x,y)$ if $\Omega\subset\R^2$ or $\xi=(x,y,z)$ if 
$\Omega\subset\R^3$. If $A$ is a complex matrix valued function having $k\ge1$
continuous derivatives on $\Omega$, we write $A\in\cont^k(\Omega,\Cnxn)$ and call 
$A$ \emph{smooth}, and (to avoid trivialities), $n\ge 2$ throughout.
Unless stated otherwise, we will label singular values of a matrix 
$A\in\Cmn$, $m\ge n$, 
in decreasing order $\sigma_1(A)\ge\ldots\ge \sigma_n(A)\ge 0$, and do 
the same for the eigenvalues of a Hermitian matrix $A\in \Cnxn$:
$\lam_1(A)\ge\ldots\ge\lambda_n(A)$. Vectors $v\in\R^n$ are always column vectors. 
The notation $A^*$ indicates the conjugate transpose of $A$.
\section{Introduction and background}\label{sec:intro}
Loss of rank of a matrix $A\in \Cmn$, $m\ge n$, is an issue of paramount importance in
linear algebra, underpinning the concerns of unique solution of a linear
system and the equivalent problems of detecting linear independence of a set
of vectors and of redundancies in data sets.
From a numerical analysis perspective (hence, in finite precision), and ignoring the
concerns of computational expense, it is widely
accepted that the SVD (singular value decomposition) of $A$ is the most reliable and
flexible tool to detect the rank of a matrix, 
for square as well as rectangular matrices\footnote{E.g., for square $A$ (i.e., $m=n$), 
criteria based on the smallest singular value of $A$ are
much more robust than going through an LU-factorization of $A$ and monitoring the
determinant; this is even more true when $A$ is rectangular and one should not
form $A^*A$.}.  Our goal in this work is to understand how the SVD can, and should,
be used to detect losses of rank for matrix valued functions $A$ smoothly
depending on parameters.  

Of course, to be interesting and doable, parameter values
where a loss of rank occurs should be isolated in parameter space, and moreover
we will want to consider problems depending on the minimal possible number of
parameters for the phenomenon to occur.  A very simple counting of the number
of degrees of freedom gives Table \ref{Cod_table} for real and complex valued $A$
of size $(m,n)$, $m\ge n$, in order to have $\rank(A)=n-1$.
\begin{table}
\begin{tabular}{|c|c|}
\hline
$\F$ 
& codimension $\rank(A)= n-1$ \\
\hline
$\R$   &  $m-n+1$ \\
$\C$  &  $2(m-n)+2$ \\
\hline
\end{tabular}
\caption{Values of the codimension for $A\in \F^{m\times n}$, $m\ge n$, to
have $\rank(A)=n-1$ in the two cases of $\F=\R$ and $\F=\C$.}
\label{Cod_table}
\end{table}
The real case tells us that we should expect a loss of rank already when $m=n$ and
$A$ depends on one real parameter (after all, this is detected by the scalar relation
$\det A=0$).  This case is fairly well understood and already
adequately discussed in \cite{BBMN, DE99}, and see also \cite{DGP-SVD} for numerical
methods able to detect and bypass the losses of rank of a smooth function $A$.
The complex case is what we will consider in this work when $m=n$, which has the
minimal possible codimension of $2$ for a single loss of rank.
Again, this setup is easily understood since $\det(A)=0$ are now two conditions, for 
the real and imaginary parts of the determinant.
For the above reasons, we will consider losses of rank for
$A\in\cont^1(\Omega,\Cnxn)$, where $\Omega \subset \R^2$ is open and simply
connected.  For a Hermitian function $A$, however, we will take
$\Omega\subset \R^3$, see Section \ref{sec2}.
%

Finally, we will also require points of loss of rank to be 
\emph{generic}, a  property which we define below.  First, recall that a value 
$v\in\Rn$ is a \emph{regular zero} for a 
smooth map $F:\R^n\rightarrow\R^n$ if $F(v)=0$ and the derivative of $F$ at $v$
is invertible.
\begin{defn}\label{def:gen_loss_of_rank} A point $\xi_0\in\Omega$ is a 
\emph{generic point of loss of rank} for $A\in\cont^1(\Omega,\Cnxn)$ if it is a regular
zero for the map
\begin{equation*}
\xi\in\Omega\mapsto
\bmat{\Real(\det(A(\xi))) \\ \Img(\det(A(\xi)))}\in\R^2.
\end{equation*}
\end{defn}

Our main contribution in this paper will be to device a topological test for the
detection of generic points where a matrix loses rank, a test which also lends
itself to a nice algorithmic criterion to detect regions where $A$ loses rank.
Our test will be based on an appropriate generalization of the concept of 
Berry phase\footnote{ordinarily associated to the eigenvectors of a
Hermitian function, see \cite{Berry} and below}, by looking at the phase accrued by the singular vectors
of a general function $A$.  This will necessitate to find, smoothly, a certain
SVD along a closed path, following what we will call the {\sl joint-MVD} along
the path.  The definition of the joint-MVD is new and to understand it
properly we will revisit the Berry phase of a Hermitian eigenproblem and  
adopt a novel characterization for generic coalescence of 
eigenvalues of the Hermitian eigenproblem.

\begin{rem}
At this stage, we point out 
that working with the Hermitian eigenproblem for $A^*A$ will not help in
finding a useful way
to characterize parameter values where a coalescing occurs, regardless of the
numerical concerns caused by forming the product.  This is already evident in the 
1-parameter case for a real square $A$, whereby through a generic coalescing the
function $\det(A)$ will change sign, but $\det(A^TA)$ will not.
Indeed, the net effect of a reformulation like the one above is to turn a 
generic problem into a non-generic one.   
%
\end{rem}
A plan of the paper is as follows.  Section \ref{sec2} is both a review and a revisitation
of the Hermitian eigenproblem and of generic coalescing of eigenvalues and of its
relation to the Berry phase accumulated by an eigenvector associated to coalescing
eigenvalues.
Section \ref{sec3} is devoted to the joint-MVD and losses of rank, and here we give
our main result, Theorem \ref{thm:main_result} and discuss some
of its consequences.

\section{Hermitian problems: \\
	 generic coalescing of eigenvalues and the Berry phase}\label{sec2}

%
%
%
In this section we consider $A\in \cont^k(\Omega, \Cnxn)$, $k\ge 0$;
typically $A$ will be Hermitian and $\Omega\subset \R^3$.

In general, 
it is well known that a continuous matrix function $A$ taking values in $\Cnxn$
has continuous eigenvalues.
Likewise, it is also well known that if $A$ is a smooth ($k\ge 1$)
Hermitian matrix valued function on $\Omega$, and $\lam_1(A)\ge \ldots\ge
\lam_n(A)$ are
its eigenvalues, then $A$ admits a Schur decomposition $A=U\Lambda U^*$ with 
smooth factors as long as its eigenvalues are distinct everywhere on $\Omega$;
further, in this case, the unitary factor $U$ is 
unique up to post-multiplication by a diagonal unitary matrix $\Phi=\diag(e^{i\alpha_1},\ldots,e^{i\alpha_n})$, 
where each $\alpha_j$ is a smooth real valued function defined on $\Omega$.
A similar result holds for a block decomposition of $A$.  That is, if $A$ has two
(or more) blocks of eigenvalues $\Lambda_1$ and $\Lambda_2$, of size $n_1, n_2$,
that stay disjoint everywhere, then there is a smooth
factorization $A=U\smat{B & 0 \\ 0 & C}U^*$ where $B$ and $C$ are Hermitian
$n_1\times n_1$ and $n_2\times n_2$, respectively, and have eigenvalues
given by those in $\Lambda_1$ and $\Lambda_2$ (e.g., see 
\cite{Ging:GlobBlockBid, HsiehSib:GlobAna}).
%

As it is well understood, the situation is very different when $A$ has a pair of
eigenvalues  
that coalesce and one can end up with no smoothness at all for the eigendecomposition
of $A$ (e.g., see \cite{Kato:Small}).  
This is why it is important to be able to locate parameter values where eigenvalues
coalesce, and it is further mandatory
to focus only on those parameter values where coalescing of eigenvalues occur
in a generic way.  As we noted in \cite{DP_LAA}, generic coalescing of eigenvalues
of a Hermitian function is a co-dimension 3 phenomenon, which we characterize
next.

\begin{defn}[Generic coalescence]\label{def:generic_coal_eig} Let $A\in\cont^1(\Omega,\Cnn)$ be Hermitian, 
and $\Omega$ be an open subset of $\R^3$. 
Let $\lam_j(\xi)=\lam_j(A(\xi))$ be the continuous eigenvalues of $A$, with
$\lam_1(\xi)\ge \cdots \ge \lam_n(\xi)$. Suppose 
\begin{equation*}
\lam_j(\xi)=\lam_{j+1}(\xi) \text{ if and only if } j=h \text{ and } \xi=\xi_0\in\Omega.
\end{equation*}
Then, $\xi_0$ is said to be a \emph{generic point of coalescence} for the eigenvalues
of $A$ according to the following. 
\begin{enumerate}
\item[i)] If $n=2$, write $A(\xi)=\bmat{a(\xi) & b(\xi)+i c(\xi) \\ b(\xi)-i c(\xi) & d(\xi)}$, 
where $a,b,c,d$ are real valued functions, and let 
\begin{equation}\label{eq:F_defn}
F(\xi)=\bmat{a(\xi)-d(\xi) \\ b(\xi) \\ c(\xi)}.
\end{equation}
Then $\xi_0$ is a generic point of coalescence for the eigenvalues of $A$ if it is a 
regular zero for $F(\xi)$.
\item[ii)] If $n>2$, let $R\subset\Omega$ be a pluri-rectangular domain containing $\xi_0$ in its 
interior, and let 
 \begin{equation}\label{eq:block_schur}
 A(\xi)=U(\xi)\bmat{P(\xi) & 0 \\ 0 & \Lambda(\xi)}U^*(\xi)
\end{equation}
be a $\cont^k$ block Schur decomposition of $A(\xi)$ on $R$, where
\begin{equation*}
\begin{split}
& \Lambda(\xi)=\diag(\lambda_1(\xi),\ldots,\lambda_{h-1}(\xi),
\lambda_{h+2}(\xi),\ldots,\lambda_n(\xi))\in\R^{(n-2)\times (n-2)}, \\
& P(\xi)\in\Ctxt \text{ has eigenvalues } \{\lambda_h(\xi),\lambda_{h+1}(\xi)\} 
\text{ for all } \xi\in R.
\end{split} 
\end{equation*}
Then, $\xi_0$ is a generic point of coalescence for the eigenvalues of $A$ if it is a 
generic point of coalescence for the eigenvalues of $P$ according to point i) above.
\end{enumerate}
\end{defn}

Next, we give an alternative condition to characterize genericity of coalescing 
eigenvalues of an Hermitian function, in a way that will be conducive to 
characterize generic losses of rank in Section \ref{sec3}, see Theorem
\ref{thm:altern_gen_loss_of_rank}.
The stepping stone is the next result, characterizing a regular zero of a
$\cont^1$ function.

\begin{lem}\label{lem:hospital} Let $F:\R^p\rightarrow\Rn$, $p\ge 1$, 
be $\cont^1$. Then $x\in\R^p$ is a regular zero for $F$ if and only if $F(x)=0$ and
\begin{equation*}
\lim_{t\rightarrow 0}\frac{\norm{F(x+tv)}_2^2}{t^2}>0 \text{ for any non-zero } v\in\Rp.
\end{equation*}
\end{lem}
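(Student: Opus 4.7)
My plan is a one-step Taylor-expansion argument. Since $F(x)=0$ and $F$ is $\cont^1$ on a neighbourhood of $x$, Fr\'echet differentiability of $F$ at $x$ gives, for each fixed $v\in\Rp$,
$$F(x+tv)=F(x)+tDF(x)v+r(t,v)=tDF(x)v+r(t,v),$$
with $\|r(t,v)\|_2/t\to 0$ as $t\to 0$. Squaring and dividing by $t^2$ yields
$$\frac{\|F(x+tv)\|_2^2}{t^2}=\left\|DF(x)v+\frac{r(t,v)}{t}\right\|_2^2\xrightarrow[t\to 0]{}\|DF(x)v\|_2^2.$$
Hence the limit appearing in the statement always exists and equals $\|DF(x)v\|_2^2$; both implications of the lemma follow by rereading this single identity.

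For the forward direction, if $x$ is a regular zero then $DF(x)$ is invertible, so in particular $DF(x)v\neq 0$ for every nonzero $v$, making the limit $\|DF(x)v\|_2^2$ strictly positive.

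For the converse, if the limit is positive for every nonzero $v\in\Rp$, then the identity above forces $DF(x)v\neq 0$ for every such $v$, i.e.\ $\Ker(DF(x))=\{0\}$. In the intended square case $p=n$ (as in Definitions \ref{def:gen_loss_of_rank} and \ref{def:generic_coal_eig}) this injectivity is equivalent to invertibility of $DF(x)$, which, together with $F(x)=0$, says precisely that $x$ is a regular zero.

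I do not expect any real obstacle: the whole content is the first-order Taylor expansion of $F$ at $x$, and because we move along a straight line $t\mapsto x+tv$ with $v$ fixed, no uniformity issue arises when passing to the limit after squaring. The label ``hospital'' hints at a L'H\^opital-style derivation, but the direct Taylor computation yields the same conclusion just as cleanly.
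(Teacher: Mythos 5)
Your proof is correct, and it reaches the paper's central identity
\begin{equation*}
\lim_{t\to 0}\frac{\norm{F(x+tv)}_2^2}{t^2}=\norm{DF(x)v}_2^2
\end{equation*}
by a different (and slightly more economical) computation. The paper, true to the label \texttt{hospital}, differentiates $\norm{F(x+tv)}_2^2$ in $t$ and applies L'H\^opital's rule, using $\cont^1$ to pass $DF(x+tv)v\to DF(x)v$ in the resulting expression; you instead expand $F(x+tv)=tDF(x)v+r(t,v)$ with $\norm{r(t,v)}_2/t\to 0$ and square, which only invokes (Fr\'echet) differentiability of $F$ at the single point $x$ rather than continuity of $DF$ near $x$. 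So your route is marginally weaker in hypotheses and avoids the L'H\^opital machinery, at no cost in clarity. The concluding step is the same in both: positivity of $\norm{DF(x)v}_2^2$ for all $v\neq 0$ means $\Ker DF(x)=\{0\}$, and in the square case $p=n$ used throughout the paper (and implicit in the definition of regular zero, which is stated for $F:\R^n\to\R^n$) this injectivity is exactly invertibility; you were right to flag that the statement's $F:\R^p\to\R^n$ is a slight overreach and that $p=n$ is the intended setting.
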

 \begin{proof} Note that 
$$
\ddt \norm{F(x+vt)}_2^2=2F(x+vt)^T DF(x+vt) v,
$$
where $DF$ is the derivative of $F$. Since $F(x)=0$, we can write
\begin{equation*}
\begin{split}
\lim_{t\rightarrow 0} \frac{2F(x+vt)^T DF(x+vt) v}{2t} & =
\lim_{t\rightarrow 0} \frac{F(x+vt)^T-F(x)^T}{t} DF(x+tv) v\\
=\ddt F(x+vt)^TDF(x)v & = v^TDF(x)^TDF(x)v  =\norm{DF(x)v}_2^2\ .
\end{split}
\end{equation*}
Therefore, upon using the L'Hospital's rule we get:
$$
\lim_{t\rightarrow 0}\frac{\norm{F(x+tv)}_2^2}{t^2}=
\lim_{t\rightarrow 0}\frac{\ddt \norm{F(x+vt)}_2^2}{2t}=\norm{DF(x)v}_2^2,
$$
from which the statement of the Lemma follows. 
\end{proof}

In Theorem \ref{thm:altern_gen_coal}, we will use Lemma \ref{lem:hospital} applied
to the discriminant of a Hermitian function, by relating genericity of coalescence of
eigenvalues to local properties of the discriminant. 

\begin{defn}[Discriminant] Let $A\in\Cnn$ have eigenvalues $\lam_1,\ldots,\lam_n$. Then the \emph{discriminant} of $A$ is defined as
\begin{equation*}
 \discr(A)=\prod_{\ell<j}(\lam_j-\lam_\ell)^2.
\end{equation*}
\end{defn}
\begin{rem}
If $A$ is Hermitian, then $\discr(A)$ is real valued and non-negative.  Further, 
$\discr(A)=0$ if and only if $A$ has a pair of repeated eigenvalues.  Also,
$\discr(A)$ is a homogeneous polynomial in the entries of $A$.
Therefore, if $A$ is a smooth function of $\xi\in\R^p$, then so is $\discr(A)$.
\end{rem}

\begin{thm}\label{thm:altern_gen_coal} Let $A\in\cont^1(\Omega,\Cnn)$ be Hermitian, 
and $\Omega$ be an open subset of $\R^3$. 
Let $\lam_j(\xi)=\lam_j(A(\xi))$ be the continuous eigenvalues of $A$, with
$\lam_1(\xi)\ge \cdots \ge \lam_n(\xi)$. Suppose 
\begin{equation*}
\lam_j(\xi)=\lam_{j+1}(\xi) \text{ if and only if } j=h \text{ and } \xi=\xi_0\in\Omega.
\end{equation*}
Then, $\xi_0$ is a generic point of coalescence for the eigenvalues of $A$ if and only if
\begin{equation*}
 \lim_{t\rightarrow 0}\frac{\discr(A(\xi_0+tv))}{t^2}>0, \text{ for any non-zero } v\in\R^3.
\end{equation*}
\end{thm}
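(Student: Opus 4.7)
The plan is to reduce the claim to the case $n=2$ and there apply Lemma \ref{lem:hospital} to an auxiliary map whose squared norm is exactly the discriminant.

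For $n=2$, writing $A=\smat{a & b+ic \\ b-ic & d}$, a direct eigenvalue computation gives
\[
\discr(A(\xi)) = (a(\xi)-d(\xi))^2 + 4b(\xi)^2 + 4c(\xi)^2 = \|G(\xi)\|_2^2,
\]
where $G(\xi)=(a(\xi)-d(\xi),\,2b(\xi),\,2c(\xi))^T$. Since $G=\diag(1,2,2)\,F$ with $F$ as in \eqref{eq:F_defn}, the Jacobians of $G$ and $F$ differ by an invertible diagonal factor, so $\xi_0$ is a regular zero of $F$ if and only if it is a regular zero of $G$. Applying Lemma \ref{lem:hospital} to $G$ then yields precisely the desired equivalence in this case.

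For $n>2$, I would reduce to the $n=2$ case via the block Schur decomposition \eqref{eq:block_schur} available on a pluri-rectangular neighborhood $R$ of $\xi_0$. Partitioning the eigenvalues of $A(\xi)$ into $S_1(\xi)=\{\lam_h(\xi),\lam_{h+1}(\xi)\}$ (the eigenvalues of $P(\xi)$) and $S_2(\xi)=\{\lam_j(\xi):j\ne h,h+1\}$ (the eigenvalues of $\Lambda(\xi)$), one has the multiplicative factoring
\[
\discr(A(\xi)) = \discr(P(\xi))\cdot\discr(\Lambda(\xi))\cdot\!\!\!\prod_{\mu\in S_1(\xi),\,\nu\in S_2(\xi)}(\mu-\nu)^2.
\]
Under the hypothesis of the theorem, only the $\discr(P)$ factor vanishes at $\xi_0$, while $\discr(\Lambda)(\xi_0)>0$ and the cross-product is also strictly positive; by continuity both remain bounded away from zero on a neighborhood of $\xi_0$. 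Dividing by $t^2$ and letting $t\to 0$, the non-$P$ factors contribute a positive multiplicative constant, so the limit in the theorem has the same sign as $\lim_{t\to 0}\discr(P(\xi_0+tv))/t^2$. Since Definition \ref{def:generic_coal_eig}(ii) declares $\xi_0$ generic for $A$ exactly when it is generic for $P$, the claim follows from the $n=2$ case.

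The only delicate step is the multiplicative factoring and the continuity of its non-$P$ factors. The key point is that those factors, although defined in terms of eigenvalues that need not be globally smooth, are symmetric in the eigenvalues of each block separately, and hence are polynomials in the entries of $P$ and $\Lambda$; those entries in turn depend continuously on $\xi$ because the block Schur decomposition is $\cont^k$ on $R$. With that in hand, the remainder of the argument is a direct invocation of Lemma \ref{lem:hospital} as in the $n=2$ case.
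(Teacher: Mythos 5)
Your proof is correct and follows essentially the same approach as the paper: reduce to the $2\times 2$ block $P$ via the block Schur decomposition, factor $\discr(A)$ into $\discr(P)$ times a smooth, strictly positive remainder, and apply Lemma \ref{lem:hospital} to a vector field whose squared norm equals (or is comparable to) $\discr(P)$. Incidentally, your version is slightly more careful than the paper's on a minor point: the paper writes $\discr(P)=(a-d)^2+b^2+c^2$, whereas the correct expression is $(a-d)^2+4b^2+4c^2$ as you have it; this does not affect the conclusion since the two quantities are comparable up to a bounded positive factor, but your handling via $G=\diag(1,2,2)F$ is the cleaner way to make the equivalence of regular zeros precise.
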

\begin{proof}
Let 
\begin{equation*}
	A(\xi)=U(\xi)\bmat{P(\xi) & 0 \\ 0 & \Lambda(\xi)}U^*(\xi) 
\end{equation*}
for all $\xi$ inside a pluri-rectangle $R$ whose interior contains $\xi_0$, as in 
\eqref{eq:block_schur}. Then, we can write
\begin{equation*}
	\discr(A(\xi))= \discr(P(\xi)) \prod_{
		\substack{
			j<\ell \\
			(j,\ell)\ne(h,h+1)}}(\lam_j(\xi)-\lam_\ell(\xi))^2=\discr(P(\xi))g(\xi),
\end{equation*}
where $g$ is defined by the above equation. Note that $g$ is a smooth and 
strictly positive function of $\xi$. Then, 
we can write
\begin{equation*}
	\lim_{t\rightarrow 0}\frac{\discr(A(\xi_0+tv))}{t^2}=
	g(\xi_0)\lim_{t\rightarrow 0}\frac{\discr(P(\xi_0+tv))}{t^2}, \text{ for any non-zero } v\in\R^3.
\end{equation*}
The result follows from Lemma \ref{lem:hospital} with $F$ as in \eqref{eq:F_defn} since, by letting $P=\bmat{a & b+ic \\ b-ic & d}$, we have
$\discr P = (a-d)^2+b^2+c^2$.
\end{proof}

In Section \ref{sec3}, we will need 
the following elementary result, which will be useful to relate
a generic coalescing of eigenvalues to a generic loss of rank.

\begin{lem}\label{lem:M_eigenvalues_and_discr} Let $A\in\Cnxn$ and $\eps\in\R$, and consider the Hermitian matrix function
\begin{equation}\label{eq:M_def}
M=\bmat{\eps I & A \\ A^* & -\eps I}.
\end{equation}
Then $M$ has eigenvalues $\pm\sqrt{\sigma_j^2+\eps^2}$, and
\begin{equation}\label{eq:discr_M}
\discr(M) = 4^n\prod_{j<\ell}(\sigma_j^2-\sigma_\ell^2)^4\prod_{j}(\sigma_j^2+\eps^2).
\end{equation}
\end{lem}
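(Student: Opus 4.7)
The plan is to diagonalize $M$ via the SVD of $A$, read off the eigenvalues, and then carry out a careful product manipulation using the difference-of-squares identity.

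First I would let $A = U\Sigma V^*$ be an SVD, with $\Sigma = \diag(\sigma_1,\ldots,\sigma_n)$, and conjugate $M$ by the $2n\times 2n$ unitary $W = \diag(U,V)$. A direct computation gives
\begin{equation*}
W^* M W = \bmat{\eps I & \Sigma \\ \Sigma & -\eps I},
\end{equation*}
and a simultaneous permutation of rows and columns turns this into a block-diagonal matrix whose $j$-th diagonal block is the $2\times 2$ real symmetric matrix $\smat{\eps & \sigma_j \\ \sigma_j & -\eps}$. The eigenvalues of each such block are obviously $\pm\sqrt{\sigma_j^2+\eps^2}$, which proves the first assertion.

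Next, I would compute the discriminant by listing the $2n$ eigenvalues as $\{\mu_1,\ldots,\mu_n,-\mu_1,\ldots,-\mu_n\}$ with $\mu_j := \sqrt{\sigma_j^2+\eps^2}$ and grouping the unordered pairs into three types: same-positive-sign pairs, same-negative-sign pairs, and opposite-sign pairs. The first two types each contribute $\prod_{j<\ell}(\mu_j-\mu_\ell)^2$; for opposite signs, the pairs $\{\mu_j,-\mu_\ell\}$ range over all $(j,\ell)\in\{1,\ldots,n\}^2$, contributing $\prod_{j,\ell}(\mu_j+\mu_\ell)^2$. Hence
\begin{equation*}
\discr(M) = \Bigl[\prod_{j<\ell}(\mu_j-\mu_\ell)^2\Bigr]^2 \cdot \prod_{j,\ell=1}^n(\mu_j+\mu_\ell)^2.
\end{equation*}

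The final step is to simplify. I would split the second product according to whether $j=\ell$ or not. The diagonal part gives $\prod_j(2\mu_j)^2 = 4^n\prod_j(\sigma_j^2+\eps^2)$, while the off-diagonal part pairs up as $\prod_{j<\ell}(\mu_j+\mu_\ell)^4$. Combining this with the first factor and using the identity $(\mu_j-\mu_\ell)(\mu_j+\mu_\ell)=\mu_j^2-\mu_\ell^2=\sigma_j^2-\sigma_\ell^2$ yields \eqref{eq:discr_M}. There is no real conceptual obstacle — the only thing to be cautious about is the bookkeeping for opposite-sign pairs, since unlike the same-sign case they range over all ordered pairs $(j,\ell)$, including $j=\ell$, and that diagonal contribution is precisely what produces the factor $4^n\prod_j(\sigma_j^2+\eps^2)$.
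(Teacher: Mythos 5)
Your proof is correct. It diverges from the paper's route: the paper establishes the eigenvalues by the characteristic polynomial identity $\det(tI-M)=\det\bigl(t^2I-(A^*A+\eps^2I)\bigr)$ and then asserts that both claims follow, whereas you conjugate $M$ by $\diag(U,V)$, permute to reduce $M$ to $n$ independent $2\times2$ real symmetric blocks $\smat{\eps & \sigma_j \\ \sigma_j & -\eps}$, and read off the spectrum directly. Your route is more constructive (it also anticipates the explicit eigenvector formulas the paper later derives in Theorem \ref{thm:eigendec_for_M}), at the cost of a slightly longer argument; the paper's determinant identity is a one-liner but leaves the discriminant bookkeeping entirely to the reader. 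Your careful split of the unordered pairs into same-sign and opposite-sign cases, with the observation that opposite-sign pairs $\{\mu_j,-\mu_\ell\}$ range over all of $\{1,\dots,n\}^2$ and the diagonal $j=\ell$ contributes $4^n\prod_j(\sigma_j^2+\eps^2)$, is exactly the combinatorics needed to land on \eqref{eq:discr_M}, so filling it in is a worthwhile addition to what the paper supplies.
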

\begin{proof} A direct computation gives
\begin{equation*}
 \det(tI - M)=\det (t^2 I-(A^*A+\eps^2 I)),
\end{equation*}
from which the two statements follow. 
\end{proof}


We conclude this section with some known results (mostly from  \cite{DE99, DP_LAA})
that allow us to lay the groundwork for detecting losses of rank in Section \ref{sec3}.

\subsection{Hermitian 1 parameter, Berry phase, and covering of a sphere}
First, consider the case of Hermitian $A$ smoothly depending on one 
parameter: $A\in\cont^k(\R,\Cnn)$. In this case, there is a standard way of 
resolving the degree of non-uniqueness of its Schur decomposition, which in the end 
leads to the concept of \emph{Berry phase}. We summarize this as follows, see \cite{DP_LAA}.

\begin{thm}\label{thm:MVD}
Let $A\in\cont^k([0,1],\Cnn)$, $k\ge 1$, be Hermitian with distinct eigenvalues 
$\lam_1(t)>\ldots>\lam_n(t)$ for all $t\in[0,1]$. Then, given a Schur decomposition of 
$A$ at $t=0$, $A(0)=U_0\Lambda_0U_0^*$, there exists a uniquely defined so called 
\emph{Minimum Variation Decomposition (MVD)} $A(t)=U(t)\Lambda(t)U^*(t)$, 
$t\in[0,1]$, satisfying $U(0)=U_0$, $\Lambda(0)=\Lambda_0$, where $U$ minimizes 
the total variation
\begin{equation}\label{eq:MVD_integral}
 \operatorname{Vrn}(U)=\int_0^1\norm{\dot U(t)}_{\mathrm{F}}dt
\end{equation}
among all possible smooth unitary Schur factors of $A$ over the interval $[0,1]$.

In addition, suppose that $A$ is $1$-periodic and of minimal period $1$.  Then,
we have:
\begin{itemize}
\item[i)]  $U$ satisfies
\begin{equation}\label{eq:Berry_defn}
 U(0)^*U(1)=\diag(e^{i\alpha_1},\ldots,e^{i\alpha_n}),
\end{equation}
where each $\alpha_j\in(-\pi,\pi]$, $j=1,\ldots,n$, is the so called \emph{Berry phase} 
associated to $\lambda_j$;
\item[ii)] if $Q\in\cont^k([0,1],\Cnn)$ is a $1$-periodic unitary Schur factor for $A$ over 
$[0,1]$, partitioned by columns $Q=[q_1, \cdots, q_n]$, then
\begin{equation}\label{eq:Berry_formula}
 \alpha_j=i\int_0^1 q_j^*(t)\dot{q}_j(t) dt \mod 2\pi,\ \text{for all }j=1,\ldots,n.
\end{equation}
\end{itemize}
\end{thm}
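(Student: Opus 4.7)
The plan is to break the statement into three pieces and handle each by exploiting the diagonal unitary gauge freedom that the hypothesis of distinct eigenvalues leaves in the Schur factor.

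First, for existence and uniqueness of the MVD: since the eigenvalues are distinct on $[0,1]$, one of the standard results recalled in Section~\ref{sec2} provides a reference smooth Schur factorization $A(t)=V(t)\Lambda(t)V^*(t)$ with $V(0)=U_0$. Any other smooth Schur factor with the same eigenvalue ordering must be of the form $U(t)=V(t)\Phi(t)$ with $\Phi(t)=\diag(e^{i\alpha_1(t)},\ldots,e^{i\alpha_n(t)})$, $\alpha_j\in\cont^k([0,1],\R)$ and $\alpha_j(0)=0$. Setting $K=V^*\dot V$ (skew-Hermitian, so $K_{jj}=i\beta_j$ with $\beta_j\in\R$), a direct calculation using $\Phi^*\dot V^*V\dot\Phi = -\Phi^*K\dot\Phi$ and that $\Phi,\dot\Phi$ are diagonal yields the pointwise identity
\begin{equation*}
\norm{\dot U(t)}_F^2 = \norm{\dot V(t)}_F^2 + \sum_{j=1}^n\bigl(\dot\alpha_j(t)+\beta_j(t)\bigr)^2 - \sum_{j=1}^n\beta_j(t)^2.
\end{equation*}
Because only the middle sum depends on the choice of $\alpha_j$, the functional $\operatorname{Vrn}(U)$ is minimized \emph{pointwise} by the ODE $\dot\alpha_j(t)=-\beta_j(t)$, which together with $\alpha_j(0)=0$ admits a unique $\cont^k$ solution. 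The minimizing $U$ is characterized equivalently by $u_j^*\dot u_j=0$ for every $j$, which will be the working form of the MVD condition.

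For part i), when $A$ is $1$-periodic we have $A(0)=A(1)$, so the columns of $U(1)$ are unitary eigenvectors of $A(0)$ in the prescribed order. Since the eigenvalues are simple, each $u_j(1)$ must be a unit-modulus multiple of $u_j(0)$, hence $U(0)^*U(1)=\diag(e^{i\alpha_1},\ldots,e^{i\alpha_n})$ with $\alpha_j\in(-\pi,\pi]$, giving \eqref{eq:Berry_defn}.

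For part ii), let $Q$ be any $1$-periodic smooth unitary Schur factor of $A$. Then $Q(t)=U(t)E(t)$ with $E(t)=\diag(e^{i\phi_j(t)})$ smooth. Using the MVD condition $u_j^*\dot u_j=0$, a short computation gives $q_j^*\dot q_j = u_j^*\dot u_j + i\dot\phi_j = i\dot\phi_j$. The periodicity $Q(0)=Q(1)$ combined with part i) yields $u_j(0)e^{i\phi_j(0)}=e^{i\alpha_j}u_j(0)e^{i\phi_j(1)}$, hence $\phi_j(0)-\phi_j(1)\equiv \alpha_j\pmod{2\pi}$. Therefore
\begin{equation*}
i\int_0^1 q_j^*(t)\dot q_j(t)\,dt = -\int_0^1\dot\phi_j(t)\,dt = \phi_j(0)-\phi_j(1)\equiv\alpha_j\pmod{2\pi},
\end{equation*}
which is \eqref{eq:Berry_formula}.

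The main technical step is the expansion of $\norm{\dot U}_F^2$: one has to recognize that the cross term $2\Real\operatorname{tr}(\Phi^*\dot V^*V\dot\Phi)$ only sees the diagonal of $K$ because $\Phi$ and $\dot\Phi$ are diagonal, and that the off-diagonal entries of $\Phi^*K\Phi$ contribute an amount independent of the choice of phases. Once this identity is in hand the variational problem reduces to pointwise minimization of a sum of squares, and both the uniqueness of the MVD and the Berry-phase formula become essentially algebraic.
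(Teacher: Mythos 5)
The paper itself does not prove this theorem — it is recalled verbatim from \cite{DP_LAA} with only a citation — so there is no in-paper proof to check against, only the literature's standard approach, which Remark \ref{rem:Ck_SVD} alludes to via the gauge condition $(U^*\dot U)_{jj}=0$. Your argument is correct and is that standard approach: you parametrize the residual freedom by a smooth diagonal unitary $\Phi$, derive the pointwise identity $\norm{\dot U}^2_{\mathrm{F}}=\norm{\dot V}^2_{\mathrm{F}}-\sum_j\beta_j^2+\sum_j(\dot\alpha_j+\beta_j)^2$ (equivalently, $\norm{\dot U}^2_{\mathrm{F}}=\sum_{j\ne\ell}\abs{(V^*\dot V)_{j\ell}}^2+\sum_j(\dot\alpha_j+\beta_j)^2$, isolating the gauge-invariant off-diagonal part), minimize pointwise via $\dot\alpha_j=-\beta_j$, i.e.\ $u_j^*\dot u_j=0$, and then evaluate $i\int_0^1 q_j^*\dot q_j\,dt$ for a periodic gauge $Q=UE$. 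All the steps check out, including the reduction from the $L^1$ functional \eqref{eq:MVD_integral} to pointwise minimization and the phase-matching that gives \eqref{eq:Berry_formula} modulo $2\pi$.
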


For our purposes, the relevance of the Berry phase is because of its relation
to detection of coalescing eigenvalues in a region of $\R^3$, homotopic to
a sphere, as we recall next.

Let $\Sph_r=\set{\xi\in\R^3: \norm{\xi}_2=r}$ be the sphere of radius $r>0$ centered 
at the origin in $\R^3$, and consider
for $\Sph_r$ the following parametrization:
\begin{equation}\label{eq:spherical_coord}
	\left\{ \begin{array}{l} x(s,t)=r\sin(\pi s)\cos(2\pi t) \\ y(s,t)=r\sin(\pi s)\sin(2\pi t) \\
		z(s,t)=r\cos(\pi s)  \end{array},\right.
\end{equation}
with $(s,t)\in[0, 1]\times[0, 1]$. The sphere $\Sph_r$ can be thought of
as covered by the family of loops $\Cov$,
$$X_s(\cdot)=(x(s,\cdot), y(s,\cdot), z(s,\cdot)),$$
as $s$ increases from $s=0$ to $s=1$. 

Let $A:\xi\in\R^3\mapsto A(\xi)\in\Cnn$ be a $\cont^k$ Hermitian matrix valued 
function, and suppose that all eigenvalues of $A$ are distinct on some sphere 
$\Sph_r$, $r>0$.  Then, the restriction of $A$ to each loop in $\Cov$ is a $1$-periodic 
function and therefore, according to Theorem \ref{thm:MVD}, each eigenvector of $A$ 
continued along $X_s$ accrues a Berry phase $\alpha_j(s)$, $j=1,\ldots,n$. All 
$\alpha_j(s)$'s can be defined to be continuous functions of $s$, again see \cite{DP_LAA}. Moreover, since 
$X_0$ and $X_1$ are just points, the corresponding MVD (see Theorem 
\ref{thm:MVD}) of $A$ must have constant factors, and therefore
\begin{equation*}
	\begin{array}{c}
		\alpha_j(0)=0\mod 2\pi, \\
		\alpha_j(1)=0\mod 2\pi.
	\end{array}
\end{equation*}

Let $\B_r$ be the solid ball $\B_r=\set{\xi\in\R^3: \norm{\xi}_2\le r}$,
so that $\Sph_r$ is the boundary of $\B_r$.


\begin{thm}{\rm (Adapted from
\cite[Theorems 4.6, 4.8 and 4.10]{DP_LAA})}
	\label{thm:generic_coal_nxn}
	Let $A\in\cont^1(\B_r,\Cnxn)$ be Hermitian, and let 
	$\lambda_1(\xi), \lambda_2(\xi), \ldots, \lambda_n(\xi)$ be its continuous 
	eigenvalues, not necessarily labelled in descending order, and let
	$\alpha_j(s)$, $s\in[0,1]$, be the continuous Berry phase functions associated to 
	$\lam_j$ over $\Sph_r$, for all $j=1,\ldots,n$.
	\begin{itemize}
		\item[(i)] If $\lambda_1(\xi), \lambda_2(\xi), \ldots, \lambda_n(\xi)$ are 
		distinct for all $\xi\in\Sph_r$, then
		\begin{equation*}
			\sum_{j=1}^n\alpha_j(s)=\sum_{j=1}^n\alpha_j(0), \text{ for all } s\in[0,1],
		\end{equation*}
		\item[(ii)]  If $\lambda_1(\xi), \lambda_2(\xi), \ldots, \lambda_n(\xi)$ are distinct
		for all $\xi\in\B_r$, then\begin{equation*}
	\alpha_j(1)=\alpha_j(0)  \text{ for all } j=1,\ldots n.
\end{equation*}
		\item[(iii)]  Finally, suppose that $\lambda_j(\xi)=\lambda_{k}(\xi)$ if and only if 
		$(j,k)=(h_1,h_2)$ and $\xi=0$, and 
		that $\xi=0$ is a generic point of coalescence for the eigenvalues of $A$. Then:
		\begin{equation*}
			\left\{
			\begin{array}{ll}
				\alpha_j(1)=\alpha_j(0), & \text{ for all } j\ne h_1,h_2\\
				\alpha_{h_1}(1)=\alpha_{h_1}(0)\pm2\pi, & \\
				\alpha_{h_2}(1)=\alpha_{h_2}(0)\mp2\pi. & 
			\end{array}
			\right.
		\end{equation*}
	\end{itemize}
\end{thm}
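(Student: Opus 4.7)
The plan is to establish the three parts in order, using the MVD machinery of Theorem \ref{thm:MVD} together with a Stokes-type reformulation of the Berry phase as a flux of Berry curvature.

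For part (i), I would pass to a $1$-periodic gauge $Q(s,t)=U(s,t)\diag(e^{-it\alpha_j(s)})$, where $U(s,\cdot)$ is the MVD along $X_s$. Theorem \ref{thm:MVD}(ii) then gives $\sum_j\alpha_j(s)\equiv i\int_0^1\operatorname{tr}(Q^*\partial_t Q)\,dt\pmod{2\pi}$. The identity $\operatorname{tr}(Q^*\partial_t Q)=\partial_t\log\det Q=i\,\partial_t\arg\det Q$ makes the integrand an exact derivative, and periodicity of $Q$ then forces $\sum_j\alpha_j(s)\equiv 0\pmod{2\pi}$. Since the branch of $\alpha_j(s)$ is chosen continuously in $s$ and $\alpha_j(0)=0$ (the loop $X_0$ degenerates to a point, with constant MVD), this upgrades to $\sum_j\alpha_j(s)=\sum_j\alpha_j(0)=0$ for all $s$.

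For part (ii), with eigenvalues simple on all of the contractible ball $\B_r$, each eigenline bundle is trivial, so one can choose a globally smooth unit eigenvector $q_j$ on $\B_r$. Form the Berry connection $A_j=i\,q_j^*dq_j$ and curvature $F_j=dA_j$, both smooth on $\B_r$. Applying Stokes to the strip of $\Sph_r$ swept between $X_0$ and $X_s$ shows that $\alpha_j(s)-\alpha_j(0)$ equals the flux of $F_j$ through this strip. Letting $s\to 1$ so the full sphere is covered,
\[
\alpha_j(1)-\alpha_j(0)=\int_{\Sph_r}F_j=\int_{\B_r}dF_j=0,
\]
since $F_j$ is closed.

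For part (iii), the same Stokes argument handles every $j\ne h_1,h_2$, because those eigenlines remain smooth on all of $\B_r$. For the coalescing pair, the eigenbundle is smooth only on $\B_r\setminus\{0\}$, so $\int_{\Sph_r}F_{h_1}$ need not vanish. Using the block-Schur form \eqref{eq:block_schur} and Definition \ref{def:generic_coal_eig}, near $\xi=0$ one has $P(\xi)-\tfrac{a+d}{2}I=\tfrac{F_1}{2}\sigma_3+F_2\sigma_1-F_3\sigma_2$, a Pauli-type Hamiltonian whose coefficient vector is (up to an invertible linear change of coordinates in $\R^3$) the vector $\vec F(\xi)$ of \eqref{eq:F_defn}. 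The coalescing eigenbundle on $\Sph_r$ is then related to the monopole line bundles on the unit sphere via the Gauss-type map $\xi\mapsto\vec F(\xi)/\|\vec F(\xi)\|$, and the regular-zero hypothesis forces this map to have topological degree $\pm 1$. The standard Pauli-spinor Berry phase computation then yields $\int_{\Sph_r}F_{h_1}=\pm 2\pi$, and part (i) (the sum over all $j$ is zero) forces $\int_{\Sph_r}F_{h_2}=\mp 2\pi$, giving the stated phase jumps.

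The main obstacle will be the monopole/degree computation in part (iii): pinning down that a generic coalescence indeed produces flux exactly $\pm 2\pi$ per coalescing eigenvector. The explicit Pauli-spinor Berry phase calculation is standard, but verifying invariance of the flux under homotopy away from the single degeneracy, and arguing that higher-order terms in $P$ near $\xi=0$ do not affect the answer, requires careful attention.
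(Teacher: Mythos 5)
The paper does not supply its own proof of Theorem \ref{thm:generic_coal_nxn}; it is quoted as ``adapted from'' Theorems 4.6, 4.8 and 4.10 of \cite{DP_LAA}, so there is nothing in this manuscript to compare your argument against line by line. That said, the route you sketch --- trace identity for the MVD gauge in (i), triviality of the eigenline bundles over the contractible ball plus Stokes in (ii), and a Chern-number/degree argument via the Pauli reduction of the $2\times2$ block in (iii) --- is a sound and well-known topological alternative to the more hands-on, perturbative MVD computations in \cite{DP_LAA}. A few places deserve tightening. In (i), the identity you invoke in fact gives $\sum_j\alpha_j(s)\equiv 0\bmod 2\pi$ for \emph{every} $s$ (the winding number of $\det Q(s,\cdot)$), after which continuity in $s$ and the degenerate loops at $s=0,1$ force the sum to be constant; this is stronger than what (i) asserts, but fine. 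In (ii) the strip/Stokes machinery is overkill: with $q_j$ globally smooth on $\B_r$ one has directly $\alpha_j(s)=\int_{X_s}i\,q_j^*dq_j$, which vanishes at $s=1$ because $X_1$ is a point. In (iii) the more substantive issue is the one you flagged: for $j=h_1,h_2$ the eigenline bundle is nontrivial on $\B_r\setminus\{0\}$, so you cannot take a single smooth $q_j$ over a neighborhood of the sphere; you must either patch two hemispherical gauges and track the transition function, or run Stokes on the annuli $\{X_{s'}:s'\in[\eps,s]\}$ (over which any line bundle is trivial) and pass to the limit $\eps\to0$, $s\to1$. The identification $\alpha_{h_1}(1)-\alpha_{h_1}(0)=\int_{\Sph_r}F_{h_1}=2\pi\,c_1(L_{h_1}|_{\Sph_r})$, the fact that the monopole bundle of $\vec h\cdot\vec\sigma$ has $c_1=\mp1$, and the degree of the Gauss map $\xi\mapsto\vec F(\xi)/\|\vec F(\xi)\|$ being $\operatorname{sign}\det D\vec F(0)=\pm1$ under the regular-zero hypothesis, together with homotopy invariance against the higher-order terms of $P$, are all standard but must actually be written out. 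Note also that your ``linear change of coordinates'' $(F_1,F_2,F_3)\mapsto(F_1/2,F_2,-F_3)$ between $\vec F$ and the Pauli coefficient vector has negative determinant; it does not affect the $\pm$ conclusion but should be stated. Finally, your slick use of part (i) to deduce $\int_{\Sph_r}F_{h_2}=-\int_{\Sph_r}F_{h_1}$ is a nice economy and avoids a second monopole computation.
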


\section{SVD, joint-MVD, and generic losses of rank}\label{sec3}

We are ready to characterize generic losses of rank
for a smooth general matrix function of two
parameters, $A=A(x,y)$ (see Definition \ref{def:gen_loss_of_rank}).
In particular, in  Theorem \ref{thm:altern_gen_loss_of_rank}
we will relate a generic loss of rank to the
local behavior of the smallest singular value of $A$.

\begin{thm}\label{thm:altern_gen_loss_of_rank} 
	A point $\xi_0\in\Omega$ is a \emph{generic point of loss of rank} for $A\in\cont^1(\Omega,\Cnxn)$ if and only if
	\begin{equation*}
		\lim_{t\rightarrow 0^+}\frac{\sigma_n(A(\xi_0+tv))}{t}>0 \text{ for any non-zero } v\in\R^2.
	\end{equation*}
\end{thm}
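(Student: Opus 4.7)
The plan is to apply Lemma~\ref{lem:hospital} to the defining map
\begin{equation*}
F(\xi)=\bmat{\Real \det A(\xi) \\ \Img \det A(\xi)},
\end{equation*}
and then translate the resulting criterion into one about $\sigma_n$ through the identity
\begin{equation*}
\|F(\xi)\|_2^2 = |\det A(\xi)|^2 = \det\!\bigl(A^*(\xi)A(\xi)\bigr) = \prod_{j=1}^n \sigma_j^2(A(\xi)).
\end{equation*}
By Lemma~\ref{lem:hospital}, $\xi_0$ being a regular zero of $F$ is equivalent to
\begin{equation*}
\lim_{t\to 0}\frac{\prod_{j=1}^n \sigma_j^2(A(\xi_0+tv))}{t^2}>0\itext{for every nonzero} v\in\R^2,
\end{equation*}
so the whole task is to peel off the factor $\sigma_n^2$ and show the remaining factor $\prod_{j<n}\sigma_j^2$ tends to a strictly positive constant along every ray.

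The pivotal claim in both directions is that only the smallest singular value vanishes at $\xi_0$, i.e., $\sigma_{n-1}(\xi_0)>0$. For the forward implication I would argue by contradiction: if $\sigma_{n-1}(\xi_0)=0$, the $1$-Lipschitz property of singular values (Weyl) forces $\sigma_{n-1}(\xi_0+tv),\sigma_n(\xi_0+tv)=O(t)$ along every direction, so $\prod_{j=1}^n \sigma_j^2 = O(t^4)$ and $\|F(\xi_0+tv)\|_2^2/t^2 \to 0$, contradicting the conclusion $\|DF(\xi_0)v\|_2^2>0$ of Lemma~\ref{lem:hospital}. Once $\sigma_{n-1}(\xi_0)>0$, continuity of the singular values gives $c\dfn \prod_{j=1}^{n-1}\sigma_j^2(A(\xi_0))>0$ and $\prod_{j=1}^{n-1}\sigma_j^2(A(\xi_0+tv))\to c$, so dividing the displayed equivalence yields
\begin{equation*}
\frac{\sigma_n^2(A(\xi_0+tv))}{t^2} \longrightarrow \frac{\|DF(\xi_0)v\|_2^2}{c}>0,
\end{equation*}
whose positive square root is exactly the stated condition on $\sigma_n(A(\xi_0+tv))/t$.

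For the converse, $\sigma_n(A(\xi_0+tv))/t \to L(v)>0$ along every nonzero $v$ first forces $\sigma_n(\xi_0)=0$ by passing to $t\to 0^+$; combined with the codimension-$2$ accounting of Table~\ref{Cod_table}, it also rules out $\sigma_{n-1}(\xi_0)=0$, because a simultaneous drop of more than one singular value is a higher-codimension event that cannot be compatible, in a two-parameter family, with a uniformly linear vanishing of $\sigma_n$ along every ray through $\xi_0$. With $\sigma_{n-1}(\xi_0)>0$ again in hand, running the displayed identity in reverse gives $\|F(\xi_0+tv)\|_2^2/t^2 \to c\,L(v)^2 > 0$, and Lemma~\ref{lem:hospital} then returns that $\xi_0$ is a regular zero of $F$.

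The main obstacle I expect is justifying $\sigma_{n-1}(\xi_0)>0$. In the forward direction it reduces to a clean Weyl contradiction, but in the converse it is the genuinely delicate step, since the hypothesis concerns only $\sigma_n$ and one must invoke the underlying codimension structure of complex rank loss to exclude the vanishing of any further singular value at $\xi_0$.
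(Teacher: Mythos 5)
Your approach is the same as the paper's: apply Lemma~\ref{lem:hospital} to $F=(\Real\det A,\ \Img\det A)^T$ and translate via $\norm{F}^2_2=\abs{\det A}^2=\prod_j\sigma_j^2$. The paper packages this identity through $\det(B)$ with $B=\smat{0 & A \\ A^* & 0}$, you use $\det(A^*A)$; these are the same in content. You are right, moreover, to focus on the step where $\prod_{j<n}\sigma_j^2$ is peeled off: it requires $\sigma_{n-1}(\xi_0)>0$, which the paper simply asserts (``$\sigma_1\ge\cdots\ge\sigma_{n-1}>0$ in a neighborhood of $\xi_0$'') without justification. Your Weyl argument in the ``only if'' direction fills this in correctly: if $\sigma_{n-1}(\xi_0)=0$, then $\sigma_{n-1}$ and $\sigma_n$ are both $O(t)$ along every ray, so $\prod_j\sigma_j^2=O(t^4)$ and the regular-zero limit would be $0$, a contradiction. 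This makes explicit something the paper glosses over.

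But your argument for the converse direction does not close, and in fact cannot as written. The codimension count of Table~\ref{Cod_table} describes what happens in \emph{generic} families; it places no constraint on a particular $A\in\cont^1(\Omega,\Cnxn)$. The converse actually fails without an extra hypothesis: take $n=2$ and $A(x,y)=\diag\bigl(x+iy,\ \tfrac12(x-iy)\bigr)$. Then $\sigma_2(A(tv))/t\to\tfrac12\norm{v}_2>0$ for every nonzero $v\in\R^2$, yet $\det A(x,y)=\tfrac12(x^2+y^2)$ is real-valued, so $F(x,y)=\bigl(\tfrac12(x^2+y^2),\,0\bigr)^T$ has $DF(0,0)=0$ and the origin is not a regular zero. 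Hence the ``if'' direction requires assuming $\sigma_{n-1}(\xi_0)>0$ (equivalently $\rank A(\xi_0)\ge n-1$), which neither the theorem statement nor your codimension reasoning supplies. The paper silently builds it in, and in the downstream application (Theorem~\ref{thm:main_genericity_equiv_theorem}) it is guaranteed by the standing hypothesis that the singular values are distinct everywhere on $\Omega$, including at $\xi_0$. Your instinct that the converse was ``the genuinely delicate step'' was correct; the resolution is that the missing positivity must be a hypothesis, not a consequence of the ray-limit condition on $\sigma_n$ alone.
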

\begin{proof}
	Consider the following $2n\times 2n$ Hermitian matrix
	\begin{equation*}
		B(\xi)=\bmat{0 & A(\xi) \\ A^*(\xi) & 0}.
	\end{equation*}
	Its eigenvalues are $\pm\sigma_1(\xi),\ldots,\pm\sigma_n(\xi)$, and therefore we can write
	\begin{equation*}
		\det(B(\xi))=(-1)^n\abs{\det(A(\xi))}^2=(-1)^n\prod_j\sigma_j(\xi)^2.
	\end{equation*}
	On the other hand, we have
	\begin{equation*}
		\det(B(\xi))=(-1)^n\left(\Real(\det(A(\xi)))^2+\Img(\det(A(\xi)))^2\right).
	\end{equation*}
	Let
	\begin{equation*}
		F(\xi)=\bmat{\Real(\det(A(\xi))) \\ \Img(\det(A(\xi)))}\in\R^2.
	\end{equation*}
	Then, by virtue of Definition \ref{def:gen_loss_of_rank} and Lemma \ref{lem:hospital}, $\xi_0$ is a generic point of loss of rank if and only if
	\begin{equation*}
		\lim_{t\rightarrow 0}\frac{\norm{F(\xi_0+tv)}_2^2}{t^2}>0 \text{ for any non-zero } v\in\R^2,
	\end{equation*}
	and this is equivalent to 
	\begin{equation*}
		\lim_{t\rightarrow 0}\frac{(-1)^n\det(B(\xi_0+tv))}{t^2}>0 \text{ for any non-zero } v\in\R^2.
	\end{equation*}
	Since the ordered singular values are continuous, and we have $\sigma_1\ge\ldots\ge\sigma_{n-1}>0$ in a 
	neighborhood of $\xi_0$, we can write
	\begin{equation*}
		\lim_{t\rightarrow 0}\frac{\sigma_n^2(A(\xi_0+tv))}{t^2}>0 \text{ for any non-zero } v\in\R^2.
	\end{equation*}
    Then, the sought statement follows from taking the square root of the limit. 
\end{proof}

Next, in Theorem 
\ref{thm:main_genericity_equiv_theorem},  we relate a generic loss of
rank to the coalescing
of the eigenvalues of a Hermitian function of 3 parameters.

\begin{thm}\label{thm:main_genericity_equiv_theorem}
	Let $A=A(\xi)\in\cont^1(\Omega,\Cnxn)$, $\Omega\subset\R^2$,
	have distinct singular values for all $\xi\in\Omega$. Consider
	\begin{equation*}
		M(\eta)=\bmat{\eps I & A(\xi) \\ A^*(\xi) & -\eps I},
	\end{equation*}
	where $\eta=(\xi,\eps)\in\Omega\times\R$. Let $\xi_0$ be the only point in $\Omega$ 
	where $A$ loses rank. Then, $\xi_0\in\Omega$ is a generic point of loss of rank for 
	$A$ if and only if $\eta_0=(\xi_0,0)$ is a generic point of coalescence for the 
	eigenvalues of $M$. 
\end{thm}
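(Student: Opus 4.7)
My plan is to combine the discriminant identity \eqref{eq:discr_M} with the two criteria supplied by Theorems \ref{thm:altern_gen_coal} and \ref{thm:altern_gen_loss_of_rank}, and then to compare limits along straight rays. Both implications will follow at once from a single factorization of $\discr(M)$ that isolates the ``obstruction'' factor $\sigma_n^2+\eps^2$.

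First, I would check that $M$ fits the single-pair coalescence framework of Definition \ref{def:generic_coal_eig} at $\eta_0$. By Lemma \ref{lem:M_eigenvalues_and_discr}, the eigenvalues of $M(\xi,\eps)$ are $\pm\sqrt{\sigma_j^2(\xi)+\eps^2}$, $j=1,\dots,n$. At $\eta_0=(\xi_0,0)$, only the pair $\pm\sqrt{\sigma_n^2+\eps^2}$ collapses (both becoming $0$), since $\sigma_n(\xi_0)=0$ while the other $\sigma_j(\xi_0)$ are positive and mutually distinct; continuity and the hypothesis that $\xi_0$ is the unique rank-loss point preclude any further coalescence in a neighborhood of $\eta_0$.

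Next, I would rewrite \eqref{eq:discr_M} as
\begin{equation*}
\discr(M(\xi,\eps))=h(\xi,\eps)\,(\sigma_n^2(\xi)+\eps^2),\qquad
h(\xi,\eps)\dfn 4^n\prod_{j<k}(\sigma_j^2(\xi)-\sigma_k^2(\xi))^4\prod_{j<n}(\sigma_j^2(\xi)+\eps^2).
\end{equation*}
Distinctness of all singular values on $\Omega$ together with $\sigma_j(\xi_0)>0$ for $j<n$ yields $h(\xi_0,0)>0$ and continuity of $h$ near $\eta_0$. Moreover, using $|\det A|^2=\sigma_n^2\prod_{j<n}\sigma_j^2$, the function $\sigma_n^2$ is $\cont^1$ near $\xi_0$, so the limit $\lim_{t\to 0}\sigma_n^2(\xi_0+tv)/t^2$ exists. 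Writing $w=(v,\delta)\in\R^2\times\R$ and $\eta_0+tw=(\xi_0+tv,t\delta)$, I would then obtain
\begin{equation*}
\lim_{t\to 0}\frac{\discr(M(\eta_0+tw))}{t^2}=h(\xi_0,0)\left(\lim_{t\to 0}\frac{\sigma_n^2(\xi_0+tv)}{t^2}+\delta^2\right).
\end{equation*}

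With this identity, both implications are immediate. If $\xi_0$ is a generic loss-of-rank point for $A$, Theorem \ref{thm:altern_gen_loss_of_rank} gives $\lim_{t\to 0}\sigma_n^2(\xi_0+tv)/t^2>0$ for every nonzero $v\in\R^2$; the right-hand side above is then strictly positive for every nonzero $w\in\R^3$ (the case $v=0$, $\delta\ne 0$ being covered by the $\delta^2$ term alone), so Theorem \ref{thm:altern_gen_coal} yields that $\eta_0$ is a generic coalescence point for $M$. Conversely, assuming $\eta_0$ is generic for $M$, specialization to $w=(v,0)$ combined with $h(\xi_0,0)>0$ forces $\lim_{t\to 0}\sigma_n^2(\xi_0+tv)/t^2>0$ for every nonzero $v$, which via Theorem \ref{thm:altern_gen_loss_of_rank} is exactly generic loss of rank for $A$ at $\xi_0$. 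I do not foresee a serious obstacle; the only delicate points are establishing $h(\xi_0,0)>0$ (where distinctness of the singular values on all of $\Omega$, including at $\xi_0$, is essential) and the existence of the relevant limits, both of which follow from the smoothness afforded by the distinctness hypothesis.
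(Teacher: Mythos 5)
Your proposal is correct and takes essentially the same route as the paper: both factor $\discr(M)$ via Lemma \ref{lem:M_eigenvalues_and_discr} into a strictly positive factor $h$ times $(\sigma_n^2+\eps^2)$, then translate between Theorem \ref{thm:altern_gen_loss_of_rank} and Theorem \ref{thm:altern_gen_coal} by comparing the ray-limits of $\sigma_n^2/t^2$ and $\discr(M)/t^2$. Your explicit identity $\lim_{t\to 0}\discr(M(\eta_0+tw))/t^2 = h(\xi_0,0)\bigl(\lim_{t\to 0}\sigma_n^2(\xi_0+tv)/t^2+\delta^2\bigr)$ is just a written-out form of the equivalence the paper states; the observation that $\sigma_n^2=|\det A|^2/\prod_{j<n}\sigma_j^2$ (hence the numerator is $\|F\|_2^2$ with $F$ the $\cont^1$ determinant map) is a clean way to guarantee that this limit exists, in the spirit of Lemma \ref{lem:hospital}.
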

\begin{proof} We will show that Theorems \ref{thm:altern_gen_loss_of_rank} and 
	\ref{thm:altern_gen_coal} are equivalent through Lemma 
	\ref{lem:M_eigenvalues_and_discr}. First, note that $\eta_0$ is the only point in 
	$\Omega\times\R$ where two eigenvalues of $M$ coalesce. Obviously, the 
	coalescing pair is $\pm\sqrt{\sigma_n^2+\eps^2}$.
	
	Because of Theorem \ref{thm:altern_gen_loss_of_rank}, $\xi_0$ is a generic point 
	of loss of rank for $A$ if and only if
	\begin{equation}\label{eq:main_genericity_equiv_theorem_eq}
		\lim_{t\rightarrow 0}\frac{\sigma_n^2(A(\xi_0+tv))}{t^2}>0 \text{ for any non-zero 
		} v\in\R^2
	\end{equation}
	and this is equivalent to 
	\begin{equation*}
		\lim_{t\rightarrow 0}\frac{\sigma_n^2(A(\xi_0+tv))+\gamma^2t^2}{t^2}>0 \text{ for any non-zero }(v,\gamma)\in\R^2\times\R.
	\end{equation*}
	Now, equation \eqref{eq:discr_M} can be rewritten as
	\begin{equation*}
		\discr(M) = \left(4^n\prod_{j<\ell}(\sigma_j^2-\sigma_\ell^2)^4\prod_{j=1}^{n-1}(\sigma_j^2+\eps^2)\right)
		(\sigma_n^2+\eps^2),
	\end{equation*}
	and the first of the two factors of this product is strictly positive in
	$\Omega\times\R$. Then, through Lemma \ref{lem:M_eigenvalues_and_discr}, 
	Equation \eqref{eq:main_genericity_equiv_theorem_eq} is equivalent to
	\begin{equation}
		\lim_{t\rightarrow 0}\frac{\discr(M(\eta_0+tv))}{t^2}>0, \text{ for any non-zero } v\in\R^3,
	\end{equation}
	which, by Theorem \ref{thm:altern_gen_coal}, expresses the fact that $\eta_0$ is a 
	generic point of coalescence for the eigenvalues of $M$. 
\end{proof}

%

We will leverage the relation between losses of rank of $A$ and coalescing of 
eigenvalues of $M$ of Theorem \ref{thm:main_genericity_equiv_theorem}, but of
course without forming $M$ but working directly with an appropriate SVD of $A$.
The stepping stone will be Theorem \ref{thm:eigendec_for_M}, for whose proof the
next Lemma will be handy.
\begin{lem}\label{lem:invar_sub}
	Let $A\in\Cnn$ be diagonalizable. Let $\lam\in\C$ be an eigenvalue of $A$ of 
	multiplicity 1 such that $\lam^2$ is an eigenvalue of $A^2$ of multiplicity 2. Let 
	$u,v\in\Cn$ span the eigenspace of $A^2$ associated to the eigenvalue $\lam^2$. 
	Then, the eigenspace of $A$ associated to $\lam$ is spanned by a linear 
	combination of $u$ and $v$. 
\end{lem}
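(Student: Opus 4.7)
The plan is to use the diagonalizability of $A$ to identify the $\lambda^2$-eigenspace of $A^2$ with the direct sum of the $\lambda$ and $-\lambda$ eigenspaces of $A$; once this structural decomposition is in hand, the conclusion becomes a one-line dimension count.

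First I would dispose of the degenerate case $\lambda=0$. Since $A$ is diagonalizable, its spectrum $\{\mu_1,\ldots,\mu_n\}$ (counted with multiplicity) gives the spectrum of $A^2$ as $\{\mu_1^2,\ldots,\mu_n^2\}$, also with multiplicity. In particular, the algebraic multiplicity of $0$ as an eigenvalue of $A^2$ equals the multiplicity of $0$ as an eigenvalue of $A$. Hence $\lambda=0$ would force $\lambda^2$ to have multiplicity $1$ in $A^2$, contradicting the hypothesis; so $\lambda\ne 0$ and in particular $\lambda\ne -\lambda$. Next, writing the eigenspace decomposition $\C^n=\bigoplus_\mu E_\mu(A)$, any $w=\sum_\mu w_\mu$ with $w_\mu\in E_\mu(A)$ satisfies $A^2 w=\lambda^2 w$ if and only if $\mu^2 w_\mu=\lambda^2 w_\mu$ for each $\mu$, which forces $w_\mu=0$ for every $\mu\notin\set{\lambda,-\lambda}$. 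Therefore
\begin{equation*}
\Ker(A^2-\lambda^2 I) \;=\; E_\lambda(A)\oplus E_{-\lambda}(A).
\end{equation*}

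The hypotheses give $\dim\Ker(A^2-\lambda^2 I)=2$ and $\dim E_\lambda(A)=1$, so $\dim E_{-\lambda}(A)=1$ and $\spn\{u,v\}=E_\lambda(A)\oplus E_{-\lambda}(A)$. Consequently the one-dimensional subspace $E_\lambda(A)$ sits inside $\spn\{u,v\}$, so any nonzero vector that spans $E_\lambda(A)$ can be written as a linear combination of $u$ and $v$, which is precisely the claim.

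There is essentially no obstacle beyond noticing that diagonalizability is exactly what makes the decomposition $\Ker(A^2-\lambda^2 I)=E_\lambda(A)\oplus E_{-\lambda}(A)$ valid: if $A$ had a nontrivial Jordan block at $\lambda$, one could produce a vector in $\Ker(A^2-\lambda^2 I)$ that is not an eigenvector of $A$ for either $\lambda$ or $-\lambda$, and the argument would fail.
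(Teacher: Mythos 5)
Your proof is correct and rests on the same core observation as the paper's: an eigenvector of $A$ for $\lambda$ is automatically an eigenvector of $A^2$ for $\lambda^2$, hence must lie in the two\nobreakdash-dimensional space $\spn\{u,v\}=\Ker(A^2-\lambda^2 I)$. The paper packages this as a one-line contradiction (a $\lambda$-eigenvector outside $\spn\{u,v\}$ would force the $\lambda^2$-eigenspace of $A^2$ to have dimension at least three), whereas you argue directly and in somewhat more detail, explicitly identifying $\Ker(A^2-\lambda^2 I)=E_\lambda(A)\oplus E_{-\lambda}(A)$ and checking $\lambda\ne 0$; those extra steps are not needed to reach the conclusion but are correct and make the structure of the eigenspaces more transparent.
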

\begin{proof} If $A$ had an eigenvector not in $\spn\{u,v\}$, then $A^2$ would have a 
3-dimensional invariant subspace associated to $\lam^2$, contradicting the 
hypothesis on the multiplicity of $\lam^2$. 
\end{proof}

\begin{thm}\label{thm:eigendec_for_M} Let $A\in\Cnn$ and $A=U\Sigma V^*$ be a 
	SVD of $A$, where $\Sigma=\diag(\sigma_1,\ldots, \sigma_n)$. Suppose that all 
	singular values of $A$ are distinct and non-zero, and let $\eps\in\R$. 
	Let $M$ be given by \eqref{eq:M_def}.  Then, $M$ admits the following
	eigendecomposition
	\begin{equation*}
		W^* M W = \bmat{S & 0 \\0 & -S},
	\end{equation*}
	where	$S=\diag\left(\sqrt{\sigma_1^2+\eps^2},\ldots,\sqrt{\sigma_n^2+\eps^2}\right)$,
	\begin{equation*}
		W=\bmat{UC & -UD\\VD & VC},
	\end{equation*}
	$C=\diag(c_1,\ldots,c_n)$, $D=\diag(d_1,\ldots,d_n)$, and the diagonal entries of 
	$C$ and $D$ are
	given by
        \begin{equation}\label{eq:CD_explicit_formulae}
	\begin{cases}
		c_j = \dfrac{1}{\sqrt{2}}\dfrac{\sigma_j}{\sqrt{\sigma_j^2 + \eps^2 - \eps\sqrt{\sigma_j^2 + \eps^2}}} \\
		d_j = \dfrac{1}{\sqrt{2}}\dfrac{\sqrt{\sigma_j^2 + \eps^2} - \eps}{\sqrt{\sigma_j^2 + \eps^2 - \eps\sqrt{\sigma_j^2 + \eps^2}}}
	\end{cases},\ j=1,\ldots,n.
        \end{equation}
\end{thm}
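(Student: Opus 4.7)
The plan is to exploit the block structure of $M^2$ and invoke Lemma \ref{lem:invar_sub} to localize the eigenvectors of $M$ inside small subspaces determined by the SVD. A direct computation yields
\begin{equation*}
M^2 = \bmat{\eps^2 I + AA^* & 0 \\ 0 & \eps^2 I + A^*A}.
\end{equation*}
Since $AA^* = U\Sigma^2 U^*$ and $A^*A = V\Sigma^2 V^*$, each number $\mu_j^2 := \sigma_j^2 + \eps^2$ is an eigenvalue of $M^2$ of multiplicity exactly $2$ (here I use that the $\sigma_j$ are distinct and nonzero), and the associated eigenspace is $\spn\bigl\{\smat{u_j \\ 0}, \smat{0 \\ v_j}\bigr\}$. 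By Lemma \ref{lem:M_eigenvalues_and_discr}, the corresponding eigenvalues of $M$ are $\pm\mu_j$, each of multiplicity $1$. Lemma \ref{lem:invar_sub} therefore forces the eigenvector of $M$ associated to $+\mu_j$ (respectively $-\mu_j$) to be a linear combination of $\smat{u_j \\ 0}$ and $\smat{0 \\ v_j}$.

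Next, I would search for $w_j^+ = c_j\smat{u_j \\ 0} + d_j\smat{0 \\ v_j}$ with $Mw_j^+ = \mu_j w_j^+$. Using $Av_j = \sigma_j u_j$ and $A^*u_j = \sigma_j v_j$, the eigenvalue equation reduces to the $2\times 2$ system
\begin{equation*}
\eps c_j + \sigma_j d_j = \mu_j c_j, \qquad \sigma_j c_j - \eps d_j = \mu_j d_j,
\end{equation*}
both of which give the same ratio $d_j/c_j = (\mu_j - \eps)/\sigma_j = \sigma_j/(\mu_j + \eps)$, consistently because $\sigma_j^2 = \mu_j^2 - \eps^2$. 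Imposing the normalization $c_j^2 + d_j^2 = 1$ and simplifying via the identity $\sigma_j^2 + (\mu_j - \eps)^2 = 2\mu_j(\mu_j - \eps) = 2(\sigma_j^2 + \eps^2 - \eps\mu_j)$ yields exactly the closed-form expressions \eqref{eq:CD_explicit_formulae}. A parallel calculation for $-\mu_j$ gives the eigenvector $w_j^- = -d_j\smat{u_j \\ 0} + c_j\smat{0 \\ v_j}$ with the same $c_j, d_j$; the pair $(w_j^+, w_j^-)$ is automatically orthonormal since $\langle w_j^+, w_j^- \rangle = -c_j d_j + d_j c_j = 0$.

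Finally, I would assemble the columns $\{w_1^+,\dots,w_n^+, w_1^-,\dots,w_n^-\}$ into $W$ and read off the claimed block form $W = \smat{UC & -UD \\ VD & VC}$. Orthogonality across distinct indices $j \neq k$ comes for free: eigenvectors of the Hermitian $M$ associated to distinct eigenvalues are orthogonal, and within each block the $u_j$'s and $v_j$'s are already orthonormal. The only real work is the algebraic simplification in the normalization step; everything else is bookkeeping. There is no conceptual obstacle, only the risk of a sign or normalization mismatch, which is handled by verifying that the same pair $(c_j, d_j)$ serves both $w_j^+$ and $w_j^-$ with the sign pattern prescribed by $W$.
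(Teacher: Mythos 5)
Your proposal is correct and follows essentially the same route as the paper's proof: compute the block-diagonal $M^2$, invoke Lemma \ref{lem:invar_sub} to localize the eigenvectors in $\spn\bigl\{\smat{u_j\\0},\smat{0\\v_j}\bigr\}$, solve the resulting $2\times2$ system for $(c_j,d_j)$ with the normalization $c_j^2+d_j^2=1$, and assemble $W$. The only difference is that you spell out the eigenvector for $-\mu_j$ and the orthonormality check, which the paper leaves as ``one easily sees that $W$ is unitary''.
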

\begin{proof} 
Recall that the eigenvalues of $M$ are $\pm\sqrt{\sigma_j^2+\eps^2}$ and they
are all distinct as long as the singular values of $A$ are all distinct and non-zero.

Let $A=U\Sigma V^*$ be a singular value decomposition of $A$, with 
$\Sigma=\diag(\sigma_1,\ldots, \sigma_n)$ and $U=[u_1,\ldots,u_n]$, 
$V=[v_1,\ldots,v_n]$, be partitioned by columns. Then, we have that
\begin{equation*}
	M^2=\bmat{\eps^2 I+AA^* & 0\\ 0 & \eps^2 I+A^*A},
%
%
	M^2\bmat{u_j \\ 0}=(\sigma_j^2+\eps^2)\bmat{u_j \\ 0},\  M^2\bmat{0 \\ v_j}=(\sigma_j^2+\eps^2)\bmat{0 \\ v_j},
\end{equation*}
for all $j=1,\ldots,n$. If $\sigma_1>\cdots>\sigma_n>0$, it follows from Lemma 
\ref{lem:invar_sub} that, for all $j=1,\ldots,n$, there exist $c_j, d_j\in\C$ not both zero 
such that 
\begin{equation}\label{eq:eigeqn_for_M}
	M\bmat{c_ju_j \\ d_j v_j}=\sqrt{\sigma_j^2+\eps^2}\bmat{c_ju_j \\ d_j v_j}.
\end{equation}
These equations can be rewritten as
\begin{equation}\label{eq:eigeqn_for_M_expl}
	\bmat{(\eps c_j +\sigma_j d_j) u_j \\ (\sigma_j c_j -\eps d_j)v_j } =
	\bmat{c_j\sqrt{\sigma_j^2+\eps^2} u_j\\ d_j\sqrt{\sigma_j^2+\eps^2} v_j},\ j=1,\ldots,n,
\end{equation}
and thus
\begin{equation}\label{eq:sys_eqns_for_CD}
	\begin{cases}
		\left(\eps - \sqrt{\sigma_j^2+\eps^2}\right) c_j +\sigma_j d_j = 0 \\
		\sigma_j c_j - \left(\eps + \sqrt{\sigma_j^2+\eps^2}\right) d_j = 0
	\end{cases},\ j=1,\ldots,n.
\end{equation}
Now, for any $j=1,\ldots,n$, \eqref{eq:sys_eqns_for_CD}
has infinitely many non trivial solutions $(c_j,d_j)$, where $c_j$ and $d_j$ are real 
valued and cannot have opposite sign. Imposing the normalization conditions 
$c_j^2+d_j^2=1$, and settling (without loss of generality) on the positive solutions, we get the expressions for $c_j$ and $d_j$ given in \eqref{eq:CD_explicit_formulae}.
Finally, let $C=\diag(c_1,\ldots,c_n)$, $D=\diag(d_1,\ldots,d_n)$, 
$S=\diag\left(\sqrt{\sigma_1^2+\eps^2},\ldots,\sqrt{\sigma_n^2+\eps^2}\right)$.
Then equations \eqref{eq:eigeqn_for_M} read
$M\bmat{UC \\ VD}=\bmat{UC \\ VD}S$,
and by letting $W=\bmat{UC & -UD\\VD & VC}$,
one easily sees that $W$ is unitary and that
\begin{equation*}
	W^* M W = \bmat{S & 0 \\0 & -S},
\end{equation*}
which is the desired result.
\end{proof}

\begin{rem}\label{rem:CD_parity}
Each $c_j$ and $d_j$ in \eqref{eq:CD_explicit_formulae}
depends smoothly on $\eps$ and on $\sigma_j$, and moreover 
$C(-\eps)=D(\eps)$ and $D(-\eps)=C(\eps)$, while $S(\eps)=S(-\eps)$ 
because of how $S$ is defined. Now, consider:	
	\begin{equation*}
		M(\eps)=
		\bmat{UC(\eps) & -UD(\eps)\\VD(\eps) & VC(\eps)}
		\bmat{S(\eps) & 0\\0 & -S(\eps)}
		\bmat{C(\eps)U^* & D(\eps)V^*\\ -D(\eps)U^* & C(\eps)V^*},
	\end{equation*}	
 and observe that $C(\eps), D(\eps), S(\eps)$ depend smoothly on $\eps$ 
 (since the singular values of $A$ are distinct and non zero). Then one has
	\begin{equation*}
		\begin{split}
			M(-\eps) & =
			\bmat{UC(-\eps) & -UD(-\eps)\\VD(-\eps) & VC(-\eps)}
			\bmat{S(-\eps) & 0\\0 & -S(-\eps)}
			\bmat{C(-\eps)U^* & D(-\eps)V^*\\ -D(-\eps)U^* & C(-\eps)V^*} \\
			& = \bmat{UD(\eps) & -UC(\eps)\\VC(\eps) & VD(\eps)}
			\bmat{S(\eps) & 0\\0 & -S(\eps)}
			\bmat{D(\eps)U^* & C(\eps)V^*\\ -C(\eps)U^* & D(\eps)V^*}.
		\end{split}
	\end{equation*}
\end{rem}

The main result of this paper will show
that a loss of rank is detected by the phases
accumulated by the singular vectors for an appropriate smooth decomposition
of $A(x,y)$ along a closed loop.  To properly define/understand our result,
it is necessary to clarify what ``appropriate'' means, and this requires looking
at how to define/compute a smooth SVD along a closed loop.

\subsection{Smooth SVD: 1 parameter}\label{SVD-1para}
We will follow the approach of \cite{DE99}.  We have a smooth
function $A$, depending on a real parameter $t\in [0,1]$ and with
distinct singular values for all $t$.   Then, 
the SVD factors of $A$ are smooth and satisfy the system of
differential equations given in \eqref{eq:Ck_SVD_DAE}.

%
%
\begin{thm}[Adapted from \cite{DE99}]\label{thm:Ck_SVD} Let 
$A\in\cont^k([0,1],\Cnn)$, $k\ge 1$, have distinct singular values 
$\sigma_1(t)>\ldots>\sigma_n(t)>0$ for all $t\in[0,1]$. Then, given any initial singular 
value decomposition $A(0)=U_0\Sigma_0V_0^*$,
there exists a $\cont^k$ singular value decomposition $A(t)=U(t)\Sigma(t)V^*(t)$, 
$t\in[0,1]$, defined as solution of the following differential-algebraic 
initial value problem:
\begin{equation}\label{eq:Ck_SVD_DAE}
\left\{
\begin{array}{l}
 \dot\Sigma = U^*\dot AV-H\Sigma+\Sigma K ,\\
 \dot U = UH, \\
 \dot V = VK, \\
 U(0) = U_0, \Sigma(0)=\Sigma_0, V(0)=V_0.
\end{array}.
\right.
\end{equation}
The matrix functions $H$ and $K$ are skew-Hermitian on $[0,1]$,
with entries given by
\begin{equation}\label{eq:offdiagHK_expression}
\begin{array}{l}
H_{j\ell}=\dfrac{\sigma_\ell(U^*\dot A V)_{j\ell}+\sigma_j
(U^*\dot A V)_{\ell j}}{\sigma_\ell^2-\sigma_j^2} \\
K_{j\ell}=\dfrac{\sigma_\ell(U^*\dot A V)_{\ell j}+\sigma_j
(U^*\dot A V)_{j\ell}}{\sigma_\ell^2-\sigma_j^2}
\end{array}
\end{equation}
for all $j\ne\ell$. The diagonal entries of $H$ and $K$ are real valued and satisfy
\begin{equation}\label{eq:diagHK_extra_requ}
H_{jj}-K_{jj}=\dfrac{\Img\big((U^*\dot AV)_{jj}\big)}{\sigma_j},
\quad \text{for all}\,\, j=1,\dots,n.
\end{equation}
\end{thm}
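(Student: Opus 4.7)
The plan is to derive the DAE system~\eqref{eq:Ck_SVD_DAE} by formally differentiating a smooth SVD $A(t) = U(t)\Sigma(t)V(t)^*$, treat the result as a definition, and then invoke standard ODE theory to produce existence, uniqueness and $\cont^k$ regularity.

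First, I would differentiate $A = U\Sigma V^*$ to obtain $\dot A = \dot U \Sigma V^* + U\dot\Sigma V^* + U\Sigma \dot V^*$. Setting $H = U^*\dot U$ and $K = V^*\dot V$, both skew-Hermitian because $U^*U = V^*V = I$, and noting $\dot V^* V = -K$, multiplication on the left by $U^*$ and on the right by $V$ yields $U^*\dot A V = H\Sigma + \dot\Sigma - \Sigma K$, which rearranges into the first equation of~\eqref{eq:Ck_SVD_DAE}. The remaining equations $\dot U = UH$ and $\dot V = VK$ simply restate the definitions of $H$ and $K$.

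Next, I would solve for $H$ and $K$ in closed form. Equating off-diagonal entries ($j\ne\ell$) in the first equation gives the linear relation $(U^*\dot A V)_{j\ell} = H_{j\ell}\sigma_\ell - \sigma_j K_{j\ell}$; coupling this with the analogous relation at $(\ell, j)$ and using the skew-Hermitian symmetries $H_{\ell j} = -\overline{H_{j\ell}}$, $K_{\ell j} = -\overline{K_{j\ell}}$, one obtains a $2\times 2$ linear system in $(H_{j\ell}, K_{j\ell})$ whose determinant is $\sigma_\ell^2 - \sigma_j^2$, nonzero by the distinctness of the singular values; inverting yields~\eqref{eq:offdiagHK_expression}. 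For the diagonal entries ($j=\ell$), splitting the equation into real and imaginary parts gives $\dot\sigma_j = \Real\bigl((U^*\dot A V)_{jj}\bigr)$ together with the constraint~\eqref{eq:diagHK_extra_requ} on $H_{jj} - K_{jj}$; the sum $H_{jj} + K_{jj}$ is unconstrained, reflecting the phase ambiguity of the SVD, and a gauge (for instance $H_{jj} = 0$) must be fixed.

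Finally, with $H$ and $K$ realized as $\cont^{k-1}$ functions of $U$, $V$, $A$ and $\Sigma$, the coupled system $\dot U = UH$, $\dot V = VK$ is linear in $(U,V)$ with smooth coefficients, so standard ODE theory yields a unique solution on all of $[0,1]$. Skew-Hermiticity of $H$ and $K$ forces $\tfrac{d}{dt}(U^*U) = H^* + H = 0$ and $\tfrac{d}{dt}(V^*V) = K^* + K = 0$, so unitarity is preserved. To verify that $A(t) = U(t)\Sigma(t)V(t)^*$ on the whole interval, I would set $E(t) = A(t) - U(t)\Sigma(t)V(t)^*$, differentiate using the DAE to conclude $\dot E \equiv 0$, and use $E(0) = 0$; diagonality and correct ordering of $\Sigma(t)$ then follow from the distinctness hypothesis and the essential uniqueness of the SVD at each fixed $t$. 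The main obstacle is precisely the gauge ambiguity in $H_{jj}, K_{jj}$: the DAE alone does not determine them, so a consistent choice has to be pinned down in order to speak of \emph{the} $\cont^k$ SVD; once such a choice is made, the rest is a routine application of ODE theory, plus a standard bootstrap to upgrade $\cont^{k-1}$ regularity of the vector field to $\cont^k$ regularity of the solution.
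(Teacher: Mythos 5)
The paper states this theorem without proof, citing \cite{DE99}, and your argument --- differentiating $A=U\Sigma V^*$ to get the DAE, inverting the resulting $2\times 2$ system for the off-diagonal entries of $H$ and $K$, observing the residual gauge freedom in the diagonal entries, and invoking ODE theory --- is exactly the standard derivation from that reference. One small inaccuracy worth noting: once $H$ and $K$ are substituted via \eqref{eq:offdiagHK_expression}, the system $\dot U = UH$, $\dot V = VK$ is \emph{not} linear in $(U,V)$, since $H$ and $K$ depend nonlinearly on $U$, $V$, and $\Sigma$ through $U^*\dot A V$; global existence on $[0,1]$ instead follows from Picard--Lindel\"of combined with the a priori bound that $U,V$ remain on the compact unitary group and the standing hypothesis that the singular values stay distinct and positive, which keeps the denominators in \eqref{eq:offdiagHK_expression} bounded away from zero.
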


\begin{rem}\label{rem:Ck_SVD} Obviously, the requirement 
\eqref{eq:diagHK_extra_requ} does not fully determine the diagonal entries of $H$ and 
$K$ and
we are left with $n$ conditions to impose. 
To uniquely determine a smooth SVD path, one possibility was suggested
in 
\cite{DE99}, simply set $H_{jj}=0$ (or $K_{jj}=0$) for all $j$, and this was shown in 
\cite{DP_LAA} to be equivalent to selecting 
the SVD path that minimizes the total variation of $U$ (or $V$) on $[0,1]$ given in 
\eqref{eq:MVD_integral} and defined originally in \cite{BBMN}; we call these the $U$-MVD or $V$-MVD, respectively.
\end{rem}

None of the options of Remark \ref{rem:Ck_SVD} to select a smooth SVD path
would be useful for our purposes of detecting when $A$ loses rank.  The correct
smooth SVD path for us is identified in the next Theorem.

\begin{thm}\label{thm:jointMVD} Let $A\in\cont^k([0,1],\Cnn)$, $k\ge 1$, have distinct 
	singular values $\sigma_1(t)>\ldots>\sigma_n(t)$ for all $t\in[0,1]$. Then, given any 
	initial singular value decomposition $A(0)=U_0\Sigma_0V_0^*$, there exists a 
	uniquely defined $\cont^k$ singular value decomposition 
	$A(t)=U(t)\Sigma(t)V^*(t)$, $t\in[0,1]$, satisfying $U(0)=U_0, 
	\Lambda(0)=\Lambda_0, V(0)=V_0$ and such that the pair $(U, V)$ minimizes the 
	quantity
\begin{equation}\label{eq:joint_MVD_integral}
\int_0^1\sqrt{\norm{\dot U(t)}^2_{\mathrm{F}}+\norm{\dot V(t)}^2_{\mathrm{F}}}\,dt
\end{equation}
among all possible smooth unitary SVD factors of $A$ over the interval $[0,1]$.
\end{thm}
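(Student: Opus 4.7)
The plan is to reduce the variational problem \eqref{eq:joint_MVD_integral} to a pointwise quadratic minimization. By Theorem \ref{thm:Ck_SVD}, every $\cont^k$ SVD of $A$ on $[0,1]$ extending the initial data $(U_0,\Sigma_0,V_0)$ arises as the solution of the DAE \eqref{eq:Ck_SVD_DAE} for some skew-Hermitian pair $(H,K)$ whose off-diagonal entries are prescribed by \eqref{eq:offdiagHK_expression} and whose diagonal entries are constrained only by \eqref{eq:diagHK_extra_requ}. Since $U(t)$ and $V(t)$ are unitary, one has $\norm{\dot U(t)}_{\mathrm{F}}=\norm{H(t)}_{\mathrm{F}}$ and $\norm{\dot V(t)}_{\mathrm{F}}=\norm{K(t)}_{\mathrm{F}}$, so the functional in \eqref{eq:joint_MVD_integral} equals $\int_0^1\sqrt{\norm{H(t)}_{\mathrm{F}}^2+\norm{K(t)}_{\mathrm{F}}^2}\,dt$.

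Next I would observe that any two smooth SVDs of $A$ compatible with the same initial data differ only by a common diagonal unitary gauge $(U,V)\mapsto(U\Phi,V\Phi)$, where $\Phi(t)=\diag(e^{i\alpha_1(t)},\ldots,e^{i\alpha_n(t)})$ with $\Phi(0)=I$. A direct computation shows that under this gauge the off-diagonal entries of $H$ and $K$ are multiplied by unimodular factors, so their Frobenius norms are invariant, while the diagonal entries shift by the same purely imaginary amount, preserving the constraint \eqref{eq:diagHK_extra_requ}. Consequently the off-diagonal contribution to $\norm{H}_{\mathrm{F}}^2+\norm{K}_{\mathrm{F}}^2$ is the same for every admissible smooth SVD, and minimizing \eqref{eq:joint_MVD_integral} reduces to the pointwise-in-$t$ minimization of $\sum_{j=1}^n(|H_{jj}(t)|^2+|K_{jj}(t)|^2)$ subject only to \eqref{eq:diagHK_extra_requ}.

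For each $t$ and $j$, set $c_j(t)=\Img((U^*\dot A V)_{jj})/\sigma_j(t)$; this scalar is itself gauge-invariant. The problem becomes: minimize $|H_{jj}|^2+|K_{jj}|^2$ subject to $H_{jj}-K_{jj}=c_j(t)$, a one-variable quadratic whose unique minimizer is $H_{jj}=-K_{jj}=c_j(t)/2$. Since the square root is strictly increasing, this pointwise minimum is also the pointwise minimum of $\sqrt{\norm{H}_{\mathrm{F}}^2+\norm{K}_{\mathrm{F}}^2}$, hence of the integral \eqref{eq:joint_MVD_integral}. The resulting diagonals are $\cont^{k-1}$ functions of $(t,U,V,\Sigma)$, so substituting $H_{jj}=-K_{jj}=c_j/2$ into \eqref{eq:Ck_SVD_DAE} yields a well-posed initial value problem whose unique $\cont^k$ solution is the joint-MVD.

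I expect the main obstacle to be the gauge-invariance step rather than the pointwise optimization itself: one must verify carefully that the off-diagonal Frobenius norms $\norm{H_{\text{off}}}_{\mathrm{F}}^2$ and $\norm{K_{\text{off}}}_{\mathrm{F}}^2$, and not only the constraint \eqref{eq:diagHK_extra_requ}, are invariant under $(U,V)\mapsto(U\Phi,V\Phi)$, so that the reduction to a pointwise problem is legitimate on the full admissible class. Once this gauge-theoretic bookkeeping is done, uniqueness of the pointwise minimizer combined with the standard existence and uniqueness theory for \eqref{eq:Ck_SVD_DAE} yields both existence and uniqueness of the joint-MVD.
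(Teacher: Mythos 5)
Your proof is correct and follows essentially the same route as the paper's: both reduce \eqref{eq:joint_MVD_integral} to a pointwise minimization of $\sum_j\bigl(|H_{jj}|^2+|K_{jj}|^2\bigr)$ by first observing that the off-diagonal contributions are invariant under the common diagonal unitary gauge $(U,V)\mapsto(U\Phi,V\Phi)$, and then minimizing subject to the prescribed difference \eqref{eq:diagHK_extra_requ}. The only cosmetic difference is the last algebraic step, where the paper uses the parallelogram identity $|H_{jj}|^2+|K_{jj}|^2=\tfrac12\bigl(|H_{jj}-K_{jj}|^2+|H_{jj}+K_{jj}|^2\bigr)$ to conclude $H_{jj}+K_{jj}=0$, while you solve the constrained quadratic directly and obtain the equivalent $H_{jj}=-K_{jj}=c_j/2$.
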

\begin{proof} 
    Since the Frobenius norm is unitarily invariant, using \eqref{eq:Ck_SVD_DAE} we immediately see that to minimize the quantity in \eqref{eq:joint_MVD_integral} is the same as to minimize 
\begin{equation*}
\int_0^1\sqrt{\norm{U^*(t)\dot U(t)}^2_{\mathrm{F}}+\norm{V^*(t)\dot V(t)}^2_{\mathrm{F}}}\,dt.
\end{equation*}
We now show that all the singular value decompositions satisfying \cref{eq:Ck_SVD_DAE,eq:offdiagHK_expression,eq:diagHK_extra_requ} share the same value for
\begin{equation*}
    \sum_{j\ne\ell}\abs{\big(U^*(t)\dot U(t)\big)_{j\ell}}^2+\abs{\big(V^*(t)\dot V(t)\big)_{j\ell}}^2, t\in[0,1].
\end{equation*}
In fact recall that, being the singular values all distinct, each unitary factor ($U$ or $V$) is 
unique up to post-multiplication by a smooth diagonal unitary matrix function $\Phi(t)=\diag\left(e^{i\phi_1(t)},\ldots,e^{i\phi_n(t)}\right)$. Let $U(t)$ and $Q(t)=U(t)\Phi(t)$ be two matrices of left singular vectors satisfying \cref{eq:Ck_SVD_DAE,eq:offdiagHK_expression,eq:diagHK_extra_requ}. A simple computation shows that 
\begin{equation*}
    \abs{\big(U^*(t)\dot U(t)\big)_{j\ell}}=\abs{\big(Q^*(t)\dot Q(t)\big)_{j\ell}}, \text{ for all } j\ne\ell \text{ and } t\in[0,1].
\end{equation*}
Of course, analogus considerations hold for $V(t)$. Therefore, minimizing \eqref{eq:joint_MVD_integral} is equivalent to minimizing
\begin{equation*}
\int_0^1\sqrt{\norm{\diag\big(U^*(t)\dot U(t)\big)}^2_{\mathrm{F}}+\norm{\diag\big(V^*(t)\dot V(t)\big)}^2_{\mathrm{F}}}\,dt,
\end{equation*}
that is
\begin{multline*}
\int_0^1\sqrt{\sum_{j=1}^{n}\biggl(\abs{H_{jj}(t)}^2+\abs{K_{jj}(t)}^2}\biggr)\,dt\\ =\int_0^1\sqrt{\frac{1}{2}\sum_{j=1}^{n}\biggl(\abs{H_{jj}(t)-K_{jj}(t)}^2+\abs{H_{jj}(t)+K_{jj}(t)}^2}\biggr)\,dt.
\end{multline*}
Since the difference $H_{jj}-K_{jj}$ is prescribed by \eqref{eq:diagHK_extra_requ}, the 
minimizing choice is given by
\begin{equation*}
 H_{jj}(t)+K_{jj}(t)=0, \text{ for all } t\in[0, 1] \text{ and all } j=1,\ldots,n.
\end{equation*}
Using this, along with \cref{eq:Ck_SVD_DAE,eq:offdiagHK_expression,eq:diagHK_extra_requ}, yields (uniquely) the desired unitary factors $U$ and $V$.
\end{proof}

\begin{defn}\label{def:jointMVD} Any smooth singular value decomposition of 
$A\in\cont^k([0,1],\Cnn)$ satisfying \eqref{eq:joint_MVD_integral} will be called a 
\emph{joint minimum variation decomposition}, or simply ``joint-MVD".
\end{defn}

\begin{rems}\label{rem:after_jointMVD}
$\,$%
\begin{itemize}
\item[(i)] If $A$ is Hermitian, then $U$ and $V$ are equal, and unique up to changes 
of sign of their columns. In this case, the joint-MVD of $A$ is effectively the MVD 
of Theorem \ref{thm:MVD}.
\item[(ii)] If $A$ is periodic, then --using the joint-MVD--
each singular vector acquires a phase factor over one period, and the
corresponding left and right singular vectors acquire the same phase.
This can be thought of as a generalization of the Berry phase to non-Hermitian
matrix functions and in fact it is the same value as the
Berry phase accrued by the eigenvectors of $\bmat{0 & A \\ A* & 0}$.
\end{itemize}
\end{rems}


%
%
%
%

\subsection{Main Result}
We can finally formulate and prove the main result of this paper, showing
that a loss of rank inside a closed loop is detected by the phases
accumulated by the singular vectors.

\begin{thm}\label{thm:main_result} Let $A\in\cont^1(\Omega,\Cnxn)$, 
	$\Omega\subset\R^2$, have distinct eigenvalues everywhere in $\Omega$. Suppose 
	that $\xi_0\in\Omega$ is a generic point of loss of rank for $A$, and that $A$ has 
	full rank everywhere else in $\Omega$. Let $\Gamma$ be a circle centered at $\xi_0$ 
	entirely contained in $\Omega$, and let it be parametrized by 
	$\gamma(t)=\xi_0+[r\cos(2\pi t), r\sin(2\pi t)]$, $t\in[0,1]$. Let 
	$A(\gamma(t))=U(t)\Sigma(t)V^*(t)$ be the joint-MVD of $A(\gamma(t))$ over the 
	interval $[0,1]$. Let $\beta_j\in(-\pi,\pi]$, $j=1,\ldots,n$, be defined through the following 
	equation:
\begin{equation*}
 U^*(0)U(1) = V^*(0)V(1) = \diag(e^{i\beta_1},\ldots,e^{i\beta_n}).
\end{equation*}
Then, we have
\begin{equation*}
 \sum_{j=1}^n\beta_j=\pi\mod 2\pi.
\end{equation*}
\end{thm}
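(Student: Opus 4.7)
The plan is to track the determinants of the unitary factors $U(t)$ and $V(t)$ around $\Gamma$, extracting from them two complementary pieces of information: a conservation law forced by the joint-MVD, and a winding number forced by the generic loss of rank. Combining the two will pin down $\sum_j\beta_j$ modulo $2\pi$, and the approach will bypass any need to form the Hermitian dilation $M$.

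First, I would lift $\det U(t)=e^{i\phi_U(t)}$ and $\det V(t)=e^{i\phi_V(t)}$ to continuous real-valued functions $\phi_U,\phi_V$ on $[0,1]$. Differentiating via $\dot U=UH$ and using Jacobi's formula gives $\dot\phi_U=-i\,\mathrm{tr}(H)=-i\sum_j H_{jj}$, and likewise $\dot\phi_V=-i\sum_j K_{jj}$. The property derived in the proof of Theorem \ref{thm:jointMVD}, namely $H_{jj}(t)+K_{jj}(t)=0$ for every $j$ and $t$, then yields $\dot\phi_U+\dot\phi_V\equiv 0$, so $\phi_U+\phi_V$ is constant on $[0,1]$. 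This is the conservation law that singles out the joint-MVD as the ``right'' decomposition here, in contrast to the $U$-MVD or $V$-MVD alternatives of Remark \ref{rem:Ck_SVD}.

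Next, from $A=U\Sigma V^*$ I read off $\det A=e^{i(\phi_U-\phi_V)}\prod_j\sigma_j$; since $\prod_j\sigma_j>0$ on $\Gamma$, the continuous function $\phi_U-\phi_V$ is a continuous lift of $\arg\det A$. By Definition \ref{def:gen_loss_of_rank}, $\xi_0$ is a regular zero of $\xi\mapsto(\Real\det A(\xi),\Img\det A(\xi))$, hence this map is a local diffeomorphism at $\xi_0$ with nonzero Jacobian determinant, and its restriction to $\Gamma$ winds around $0\in\C$ with winding number $\pm 1$. Consequently $\bigl(\phi_U(1)-\phi_V(1)\bigr)-\bigl(\phi_U(0)-\phi_V(0)\bigr)=\pm 2\pi$.

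Finally, the relation $U(1)=U(0)\diag(e^{i\beta_1},\ldots,e^{i\beta_n})$ gives $\phi_U(1)-\phi_U(0)=\sum_j\beta_j+2\pi k_U$ for some integer $k_U$, and analogously $\phi_V(1)-\phi_V(0)=\sum_j\beta_j+2\pi k_V$ with the same $\beta_j$'s. Summing and invoking the conservation law yields $2\sum_j\beta_j+2\pi(k_U+k_V)=0$, while subtracting and invoking the winding identity yields $2\pi(k_U-k_V)=\pm 2\pi$. Hence $k_U-k_V$ is odd, so $k_U+k_V$ is also odd, and $\sum_j\beta_j=-\pi(k_U+k_V)$ is an odd multiple of $\pi$, i.e.\ $\equiv\pi\pmod{2\pi}$, as claimed. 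The conceptual obstacle in this approach is pinpointing the joint-MVD conservation $\phi_U+\phi_V=\mathrm{const}$; once that is in hand, everything else is forced by the geometry of the generic zero of $\det A$ and a short parity argument.
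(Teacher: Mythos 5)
Your proof is correct and takes a genuinely different, and considerably more direct, route than the paper's. The paper proceeds through the Hermitian dilation $M(x,y,z)=\smat{zI & A \\ A^* & -zI}$: it uses Theorem~\ref{thm:main_genericity_equiv_theorem} to convert the loss of rank of $A$ at $\xi_0$ into a generic coalescence of eigenvalues of $M$ at $(\xi_0,0)$, invokes the sphere-covering Berry-phase machinery of Theorem~\ref{thm:generic_coal_nxn} on a sphere $\Sph_r$ around $(\xi_0,0)$, exploits the parity $\alpha_{j+n}(s)=\alpha_j(1-s)$ stemming from Remark~\ref{rem:CD_parity} to get $\varphi(s)+\varphi(1-s)=2\pi$ for $\varphi(s)=\sum_{j\le n}\alpha_j(s)$, and reads off $\varphi(1/2)=\pi$ on the equator, identifying it with $\sum_j\beta_j$. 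Your argument bypasses all of this and works only with $\det U(t)=e^{i\phi_U(t)}$ and $\det V(t)=e^{i\phi_V(t)}$: Jacobi's formula combined with $\dot U=UH$, $\dot V=VK$ gives $\dot\phi_U=-i\,\mathrm{tr}(H)$ and $\dot\phi_V=-i\,\mathrm{tr}(K)$, so the defining joint-MVD condition $H_{jj}+K_{jj}\equiv 0$ becomes the conservation law $\phi_U+\phi_V\equiv\text{const}$; the identity $\det A=e^{i(\phi_U-\phi_V)}\prod_j\sigma_j$ makes $\phi_U-\phi_V$ a continuous lift of $\arg\det A$ along $\Gamma$, whose net increment is $\pm 2\pi$ because $\xi_0$ is a regular zero of $\xi\mapsto(\Real\det A,\Img\det A)$ and this map is nonvanishing on $\Omega\setminus\{\xi_0\}$; the short parity computation then forces $\sum_j\beta_j\equiv\pi\pmod{2\pi}$. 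Your route buys brevity and self-containedness (no Hermitian dilation, no sphere covering, no appeal to Theorem~\ref{thm:generic_coal_nxn}), and it exposes in one line why the joint-MVD, and not the $U$- or $V$-MVD of Remark~\ref{rem:Ck_SVD}, is the right transport: it is exactly the one that transports the phase of $\det U\cdot\det V$ flatly. The paper's route buys the structural equivalence between rank loss and conical intersections of the Hermitian dilation, which motivates the whole joint-MVD construction and carries over unchanged to Theorem~\ref{thm:main_result_nocoal}. Two small points of polish: when you assert the winding number is $\pm 1$ on the (possibly large) circle $\Gamma$ rather than on a small circle around $\xi_0$, you should say explicitly that the disk bounded by $\Gamma$ lies in $\Omega$ (a consequence of $\Omega$ being open and simply connected) and that the radial contraction of $\Gamma$ onto a small circle keeps $\det A\ne 0$, so the winding number is preserved; and the hypothesis ``distinct eigenvalues everywhere'' in the statement should be read as distinct \emph{singular} values, which is what guarantees the joint-MVD exists along $\Gamma$.
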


\begin{proof} Without loss of generality, we may take $\xi_0=(0,0)$. 
Consider the Hermitian matrix function of three parameters
\begin{equation*}
M(x,y,z)=\bmat{z I & A(x,y) \\ A^*(x,y) & -z I}, (x,y)\in\Omega \text{ and } z\in\R.
\end{equation*}
Because of Theorem \ref{thm:eigendec_for_M}, for all $(x,y,z)$, $M$ has the Schur 
eigendecomposition 
\begin{equation*}
 W(x,y,z)^* M(x,y,z) W(x,y,z) = \bmat{S(x,y,z) & 0 \\0 & -S(x,y,z)},
\end{equation*}
where
\begin{equation*}
\begin{array}{c}
S(x,y,z)=\diag\left(\sqrt{\sigma_1(x,y)^2+z^2},\ldots,\sqrt{\sigma_n^2(x,y)+z^2}\right), 
\\
W(x,y,z)=\bmat{U(x,y)C(x,y,z) & -U(x,y)D(x,y,z)\\V(x,y)D(x,y,z) & V(x,y)C(x,y,z)}.
\end{array}
\end{equation*}
Let us label the eigenvalues $\lambda_1,\ldots,\lambda_{2n}$ of $M$ in the same order 
as they appear along the diagonal of $\bmat{S & 0\\ 0 & -S}$, that is so that 
$\lambda_j=\sqrt{\sigma_j^2+z^2}$, $\lambda_{n+j}=-\sqrt{\sigma_j^2+z^2}$, for 
$j=1,\ldots,n$.

Consider the sphere $\Sph_r$ parametrized by $(s,t)\in[0,1]\times[0,1]$ as in 
\eqref{eq:spherical_coord}. 
It follows from Theorem \ref{thm:main_genericity_equiv_theorem} that $M$ and its 
eigenvalues satisfy the hypotheses of 
Theorem \ref{thm:generic_coal_nxn}-(i,ii) on $\B_r$, with the pair of eigenvalues that 
undergoes coalescence being $(\lambda_n,\lambda_{2n})$. 
For each $j=1,\ldots,2n$, let $\alpha_j(s)$, $s\in[0,1]$, be the continuous Berry phase 
function associated to $\lam_j$ over $\Sph_r$, where we choose $\alpha_j(0)=0$ for 
all $j=1,\ldots,2n-1$, and $\alpha_{2n}(0)=2\pi$. Then, Theorem 
\ref{thm:generic_coal_nxn} gives
\begin{equation}\label{eq:sum_of_alphajs}
\sum_{j=1}^{2n}\alpha_j(s)=2\pi, \text{ for all } s, \quad \text{and}\quad 
\left\{
\begin{array}{ll}
	\alpha_j(1)=0, & \text{ for all } j\ne n, 2n, \\
	\alpha_{n}(1)=2\pi, & \\
	\alpha_{2n}(1)=0. & 
\end{array}
\right.
\end{equation}
Now, let
\begin{equation*}
 \varphi(s)\vcentcolon =\displaystyle{\sum_{j=1}^{n}\alpha_j(s)},\ s\in[0,1].
\end{equation*}
From the conclusion of Remark \ref{rem:CD_parity}, and through 
\eqref{eq:Berry_formula} and \eqref{eq:spherical_coord}, we have that
$\alpha_{j+n}(s)=\alpha_{j}(1-s)$,
for all $j=1,\ldots,n$ and all $s\in[0,1]$. Therefore, using \eqref{eq:sum_of_alphajs}, we have
$\varphi(s)+\varphi(1-s)=2\pi$ for all $s\in[0,1]$, and in
particular $\varphi\left(\dfrac{1}{2}\right)=\pi$.
Finally, note that, taking $s=\hat s\vcentcolon=\dfrac{1}{2}$, we have $z(\hat s,t)=0$, 
and therefore
$$M(x(\hat s,t),y(\hat s,t),z(\hat s,t))=\bmat{0 & A(\gamma(t)) \\ A^*(\gamma(t)) & 0},\ 
\text{ for all } t\in[0,1].$$
From the previous identity, and from the definition of joint-MVD, it follows that, for 
each $j=1,\ldots,n$, the phase $\beta_j$ accrued by the $j$-th singular vectors of the 
joint-MVD of $A(\gamma(\cdot))$ along $\Gamma$ coincides with the Berry phase 
$\alpha_j(\hat s)$ accrued by the eigenvector of $M$ associated to the eigenvalue 
$\lambda_j$ of $M$ along the circle $(x(\hat s,t),y(\hat s,t),0)$. This concludes the 
proof.
\end{proof}

\begin{thm}\label{thm:main_result_nocoal} With the same notation and hypotheses of 
Theorem \ref{thm:main_result} above, except for $A$ being full rank everywhere in 
$\Omega$, we have:
\begin{equation*}
 \sum_{j=1}^n\beta_j=0\mod 2\pi.
\end{equation*}
\end{thm}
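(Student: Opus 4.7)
The plan is to reuse the blueprint of Theorem \ref{thm:main_result}, keeping every construction intact and only replacing the coalescence analysis with the ``no-coalescence'' cases (i) and (ii) of Theorem \ref{thm:generic_coal_nxn}. Without loss of generality I would take $\xi_0=(0,0)$ and form the Hermitian function
\begin{equation*}
M(x,y,z)=\bmat{zI & A(x,y)\\ A^*(x,y) & -zI},
\end{equation*}
restricted to a solid ball $\B_r$ centered at the origin of $\R^3$, with $r$ small enough that the closed disk bounded by $\Gamma$ is contained in $\Omega$. Because $A$ has full rank throughout $\Omega$, $\sigma_n(x,y)$ stays away from $0$ on that disk, hence $\sigma_n^2(x,y)+z^2>0$ everywhere on $\B_r$; by Theorem \ref{thm:eigendec_for_M}, the $2n$ eigenvalues $\lambda_j=\sqrt{\sigma_j^2+z^2}$ and $\lambda_{n+j}=-\sqrt{\sigma_j^2+z^2}$, $j=1,\ldots,n$, of $M$ remain pairwise distinct on all of $\B_r$.

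Next I would parametrize $\Sph_r$ by \eqref{eq:spherical_coord} and let $\alpha_j(s)$, $s\in[0,1]$, be the continuous Berry phase of the eigenvector of $M$ associated with $\lambda_j$ along the loop $X_s$. Since $X_0$ degenerates to a point I would normalize $\alpha_j(0)=0$ for every $j=1,\ldots,2n$; this is legitimate in unison because, unlike in the coalescence setting, there is no forced $2\pi$ jump to accommodate. Theorem \ref{thm:generic_coal_nxn}(ii) then gives $\alpha_j(1)=\alpha_j(0)=0$ for all $j$, while part (i) yields the conservation law $\sum_{j=1}^{2n}\alpha_j(s)=0$ for every $s\in[0,1]$.

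I would then replay the symmetry argument from the proof of Theorem \ref{thm:main_result}: Remark \ref{rem:CD_parity}, combined with \eqref{eq:Berry_formula} and \eqref{eq:spherical_coord}, gives $\alpha_{j+n}(s)\equiv\alpha_j(1-s)\pmod{2\pi}$ for $j=1,\ldots,n$, and the common normalization $\alpha_{j+n}(0)=0=\alpha_j(1)$ eliminates the $2\pi$ offset, so this holds as an equality of real-valued continuous functions. Setting $\varphi(s)\dfn\sum_{j=1}^{n}\alpha_j(s)$, the conservation law rewrites as $\varphi(s)+\varphi(1-s)=0$, and in particular $\varphi(1/2)=0$. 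Since the equatorial loop $X_{1/2}$ lies in the plane $z=0$, where $M$ reduces to $\bmat{0 & A(\gamma(\cdot))\\ A^*(\gamma(\cdot)) & 0}$, Remarks \ref{rem:after_jointMVD}(ii) identify each $\beta_j$ with $\alpha_j(1/2)\pmod{2\pi}$; summing over $j$ yields $\sum_{j=1}^n\beta_j=\varphi(1/2)=0\pmod{2\pi}$, as desired.

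The only step that calls for care is the consistency of the initial normalization with the antipodal symmetry. In the coalescence case one had to set $\alpha_{2n}(0)=2\pi$ precisely because (iii) makes $\alpha_n(1)=2\pi$ and the symmetry forces $\alpha_{2n}(0)=\alpha_n(1)$; that one unit of $2\pi$ is exactly what converts into the final answer $\pi$ rather than $0$. Here, the full-rank hypothesis removes the coalescing pair and replaces (iii) by (ii), so $\alpha_j(1)=0$ for every $j$ and the natural choice $\alpha_j(0)=0$ for all $j$ is consistent with the antipodal symmetry without any offset. Everything else is a verbatim transcription of the earlier argument.
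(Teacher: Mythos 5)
Your proof is correct and follows exactly the strategy the paper intends: replay the proof of Theorem \ref{thm:main_result}, replacing the coalescence step by the no-coalescence conclusions of Theorem \ref{thm:generic_coal_nxn}. One small point worth flagging: you invoke parts (i) and (ii) of Theorem \ref{thm:generic_coal_nxn}, which is the right pair here (eigenvalues of $M$ distinct on all of $\B_r$), whereas the paper's one-line proof cites ``(i,iii)''; this looks like a slip in the paper, since (iii) is the coalescence case and does not apply under the full-rank hypothesis, and conversely the proof of Theorem \ref{thm:main_result} cites ``(i,ii)'' when it actually needs (i,iii). Your handling of the initial normalization and of why the antipodal relation $\alpha_{j+n}(s)=\alpha_j(1-s)$ holds as an equality of continuous functions, rather than merely mod $2\pi$, is also a welcome bit of extra care that the paper leaves implicit.
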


\begin{proof} The proof follows the same line as that of Theorem 
\ref{thm:main_result}, using Theorem \ref{thm:generic_coal_nxn}-(i,iii).
\end{proof}
Finally, we have the following result that follows at once from 
Theorems \ref{thm:main_result} and \ref{thm:main_result_nocoal}.
\begin{cor}\label{cor:CircleCoalesce}
Let $A\in\cont^1(\Omega,\Cnxn)$, $\Omega\subset\R^2$, have distinct singular values everywhere on $\Omega$. Let $\Gamma$ be a circle entirely contained in $\Omega$, and $\beta_1,\ldots,\beta_n$ be the phases accrued by the singular vectors of the joint-MVD of $A$ along $\Gamma$. Suppose
\begin{equation*}
 \sum_{j=1}^n\beta_j=\pi\mod 2\pi.
\end{equation*}
Then, there exists a point of loss of rank for $A$ inside the region enclosed by $\Gamma$.
\end{cor}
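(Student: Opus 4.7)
The plan is to argue by contraposition of Theorem \ref{thm:main_result_nocoal}. I would assume, for the sake of contradiction, that $A$ has full rank at every point of the closed disk $D$ bounded by $\Gamma$. Then by compactness of $D$ and continuity of $\sigma_n\circ A$, there is a strictly positive lower bound on $\sigma_n$ on $D$, and hence an open neighborhood $\Omega'\subseteq\Omega$ of $D$ on which $A$ still has full rank. By hypothesis, $A$ also has distinct singular values on all of $\Omega$, and in particular on $\Omega'$.

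Next I would apply Theorem \ref{thm:main_result_nocoal} to $A$ restricted to $\Omega'$, with the same circle $\Gamma\subset\Omega'$: the hypotheses of that theorem (distinct singular values everywhere and full rank everywhere in the working domain) are now all in place. The conclusion gives
\begin{equation*}
\sum_{j=1}^n\beta_j=0\mod 2\pi,
\end{equation*}
where the $\beta_j$'s are the phases accrued along $\Gamma$ by the singular vectors of the joint-MVD of $A$. Since the joint-MVD is uniquely determined by the restriction of $A$ to $\Gamma$ (Theorem \ref{thm:jointMVD}), this is precisely the same collection of phases appearing in the statement of the corollary. Combining with the standing hypothesis $\sum_j\beta_j=\pi\mod 2\pi$ would force $\pi\equiv 0\mod 2\pi$, a contradiction.

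Therefore $A$ cannot be full rank throughout $D$, and there must exist at least one point inside the region enclosed by $\Gamma$ at which $A$ loses rank. The only place that required any attention was the logistical step of enlarging $D$ to an open set $\Omega'$ on which Theorem \ref{thm:main_result_nocoal} can be applied verbatim; once this is done the argument is essentially immediate, which matches the paper's remark that the corollary ``follows at once'' from the two main theorems.
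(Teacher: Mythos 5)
Your proof is correct and is exactly the argument the paper intends when it says the corollary ``follows at once'' from Theorems \ref{thm:main_result} and \ref{thm:main_result_nocoal}: contraposition via Theorem \ref{thm:main_result_nocoal}. The only cosmetic refinement worth making is to choose $\Omega'$ as a slightly larger open \emph{disk} concentric with $\Gamma$ so that it is visibly simply connected, and to note explicitly that the phases $\beta_j$ are intrinsic to the loop (the joint-MVD depends only on $A\vert_\Gamma$, and changing the initial SVD conjugates $U^*(0)U(1)$ by a diagonal unitary, leaving it unchanged), both of which you already gesture at.
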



%

\begin{rem}
Corollary \ref{cor:CircleCoalesce} was formulated relative to a
circle $\Gamma$.  However, this is not necessary.  Using the same homotopy
argument we adopted in \cite{DP_Simax}, it is enough to have $\Gamma$ be
a simple closed curve.
\end{rem}

\section{Examples}\label{sec:examples}

The first example illustrates how Corollary \ref{cor:CircleCoalesce} is used to infer the presence of a point of loss of rank inside the region bounded by a closed curve.

\begin{exm}\label{exm:4x4} 
For this example we have explicitly constructed the $4\times4$ matrix function in \eqref{eq:exmFunction}, where the entries of the matrices $M_0$ to $M_2$ are pseudorandom numbers uniformly distributed in $[-1,1]$, rounded up to the nearest hundredth:
\begin{equation}\label{eq:exmFunction}
A(x,y)=M_0+xM_1+yM_2\,,\ (x,y)\in\R^2,
\end{equation}
with
\begin{equation*}\label{eq:exmMatrices}
\begin{split}
    M_0= &\bmat{
0.03+0.23i & -0.71+0.16i & -0.43-0.53i & 0.90-0.40i\\ 
0.11-0.96i & -0.84-0.16i & 0.26+0.72i & -0.20-0.62i\\ 
0.40-0.98i & 0.96+0.12i & 0.19-0.25i & 0.33+0.26i\\ 
-0.76-0.46i & 0.37-0.22i & -0.50-0.28i & 0.41-0.76i
    }, \\
    M_1= &\bmat{
-0.02-0.79i & 0.48-0.76i & -0.63+0.45i & 0.42-0.03i\\ 
-0.22+0.19i & 0.99+0.05i & -0.80+0.77i & -0.48-0.73i\\ 
0.34+0.83i & 0.94+0.48i & -0.77-0.74i & -0.39-0.26i\\ 
0.10-0.56i & -0.63-0.14i & 0.94+0.00i & 0.09-0.64i
}, \\
    M_2= &\bmat{
0.13+0.40i & 0.43-0.83i & -0.53+0.15i & 0.59+0.59i\\ 
0.07+0.29i & 0.85-0.07i & -0.08-0.86i & 0.75-0.16i\\ 
-0.53+0.26i & 0.17+0.61i & 0.53-0.16i & 0.86-0.83i\\ 
0.95+0.56i & 0.23+0.69i & -0.29-0.99i & -0.46+0.19i
    }.
\end{split}
\end{equation*}
A visual inspection of the surface $\sigma_4(A(x,y))$ suggests the presence of a point of loss of rank for $A$, see Figure \ref{fig:example}. So, we have numerically computed the joint-MVD of $A$ along two circles, a larger one $\Gamma_1$ enclosing the supposed point of loss of rank, and a smaller one $\Gamma_2$ not enclosing the point, see again Figure \ref{fig:example}. Thus, we have computed $\beta_1,\ldots,\beta_4$, i.e. the phases accrued by 
the four columns of 
the unitary factors of the joint-MVD of $A$ along the two circles. Table \ref{table:exm} shows the computed phases, rounded up to the fourth decimal place. The outcome of the computation clearly confirms the expectation of Theorems \ref{thm:main_result} and \ref{thm:main_result_nocoal}: there is a loss of rank inside $\Gamma_1$, but not inside $\Gamma_2$.  
All the computations have been performed using the {\tt MATLAB} function \texttt{complexSvdCont} available at \emph{https://www.mathworks.com/matlabcentral/ fileexchange/160876-smooth-singular-value-decomp-of-complex-matrix-function}. The {\tt MATLAB} code follows closely the algorithm proposed in \cite{DPP2013} for the computation of the MVD of Hermitian matrix functions. In a nutshell, it performs a variable-stepsize continuation of the smooth SVD of a matrix function of one parameter where, at each step, a suitable Procrustes problem is solved to minimize the quantity in \eqref{eq:joint_MVD_integral}.
\begin{table}
    \centering
    \begin{tabular}{|c|c|c|}
        \hline
        & $\Gamma_1$ & $\Gamma_2$\\ \hline
        $\beta_1$ & $-0.0206$ & $+0.7928$\\
        $\beta_2$ & $-2.5572$ & $-0.7905$\\
        $\beta_3$ & $+2.6831$ & $+0.0004$\\
        $\beta_4$ & $+3.0363$ & $-0.0027$\\ \hline
        $\sum\limits_{j=1}^4\beta_j$ & $+3.1416$ & $+0.0000$\\
        \hline
\end{tabular}\caption{Numerically computed phases for Example \ref{exm:4x4}.}\label{table:exm}
\end{table}

\begin{figure}
    \centering
    \includegraphics[width=0.8\linewidth]{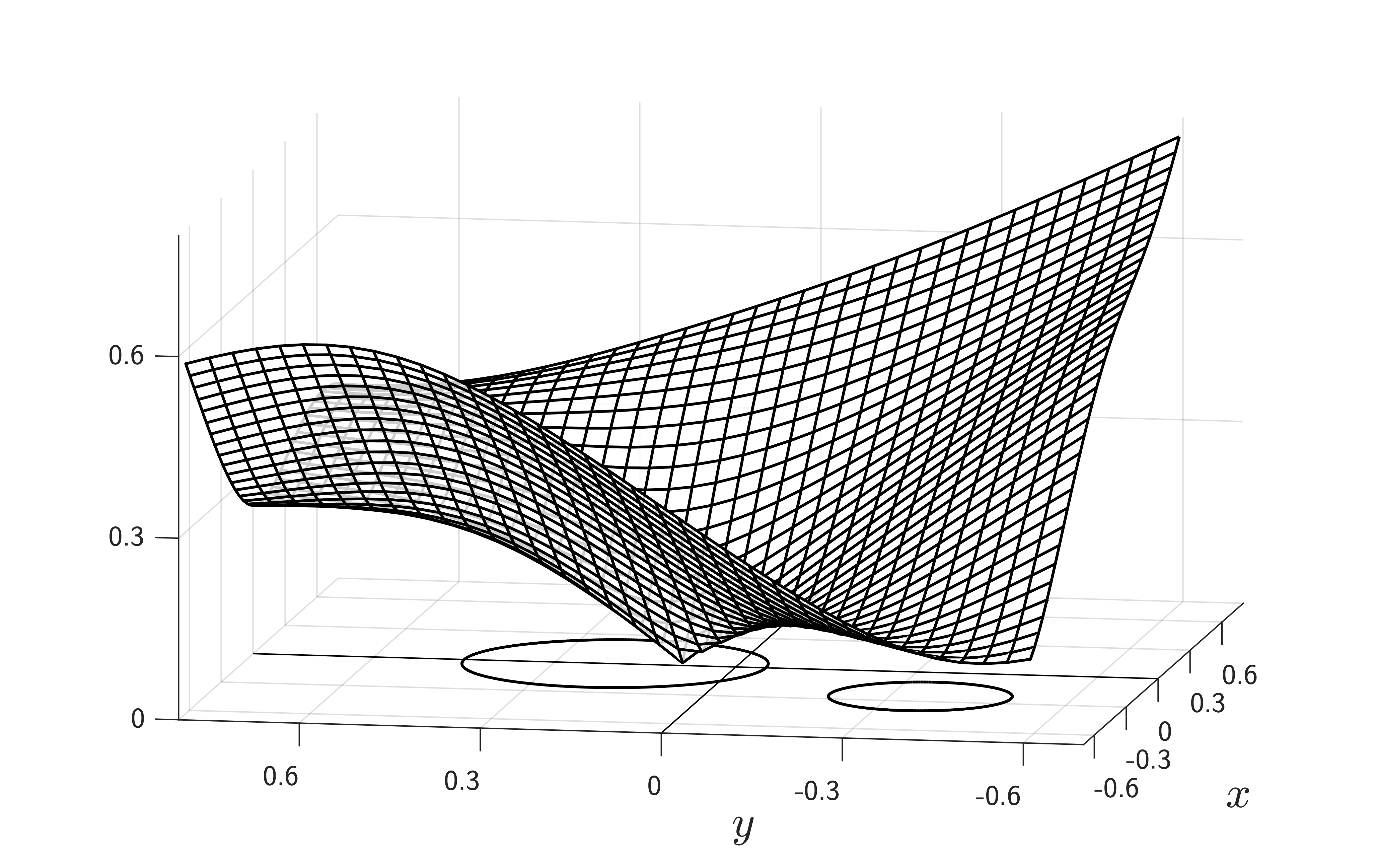}
    \caption{Reference figure for Example \ref{fig:example}: graph of the smallest singular value $\sigma_4(A(x,y))$ and two circles $\Gamma_1$ and $\Gamma_2$ in parameters' space, $\Gamma_1$ being the largest one on the left.}
    \label{fig:example}
\end{figure}
\end{exm}

The next example shows that, in general, by taking the MVD of just
$U$ and/or $V$ will not produce a phase accumulation revealing
the presence of a generic point of loss of rank, and that taking the joint
MVD is necessary.

\begin{exm}\label{exm:NoMVD}
Let $$A(x,y)=\bmat{1 & 1 \\ 0 & x-iy}, (x,y)\in\R^2,$$
and $\Gamma_r$ be the circle parametrized by
$$\gamma(t)=r[\cos(2\pi t),\sin(2\pi t)], r>0, t\in[0,1].$$
Notice that $A$ is full rank everywhere except at the origin $(0,0)$, where it 
has a generic point of loss of rank. By direct computation, it is easy to obtain that:
\begin{itemize}
	\item[i)] letting $\beta_1, \beta_2$ be the phases accrued by, respectively, the first
	 and second column of the unitary factors of the joint-MVD of $A$ 
	 along $\Gamma_r$, one has
	\begin{equation*}
		\beta_1(r) = \pi\frac{r^2}{\left(\frac{1}{2}\left(\sqrt{r^4+4}-r^2\right)+1\right)^2+r^2},
		\quad \beta_2(r) = \pi-\beta_1(r),\quad \text{for all } r\ge 0,
	\end{equation*}
    so that $\sum \beta_j=\pi$ and, in agreement with Corollary \ref{cor:CircleCoalesce}, the point of loss of rank at the origin is properly detected;
 \item[ii)] letting $\alpha_1, \alpha_2$ be the phases accrued by the columns 
	of the unitary factors of the $U$-MVD of $A$ along $\Gamma_r$, one has
	\begin{equation*}
			\alpha_1(r)=2\beta_1(r) \mod 2\pi, \quad \alpha_2(r)=- \alpha_1(r), \quad \text{for all } r\ge 0;
	\end{equation*}
	\item[iii)] no phase is accrued by the columns of the unitary factors of the $V$-MVD 
	of $A$ along $\Gamma_r$, for any value of $r$.
\end{itemize}
In other words, the MVD of just $U$ and/or $V$ does not produce a phase accumulation
revealing the presence of a generic point of loss of rank, whereas the
joint MVD does.  Moreover, to detect the presence of a generic point of loss of rank, 
one has to consider the phase accrued by all singular vectors, as looking solely at
the singular vectors corresponding to the smallest singular value is not sufficient.
\end{exm}

\section{Conclusions}\label{sec:conclusions}
In this work we considered how to detect generic losses of rank for a complex valued matrix function $A$ smoothly depending on two parameters.  We proved that a generic loss of rank is detected by monitoring the (Berry) phases accrued by the singular vectors of an appropriate SVD along closed loops in parameter space containing the value where the loss of rank occurs.  To achieve this, we had to introduce a novel smooth path of the SVD, which we called ``joint MVD'' (joint minimum variation decomposition) for the singular vectors.  We complemented our theoretical results with numerical examples both to locate losses of rank, and to show the necessity of considering the joint MVD.

Although we have considered a single loss of rank within a planar region $\Omega$,
it should be possible to deal with the case of multiple (generic) losses of rank in a 
similar way to how we dealt with multiple coalescing eigenvalues in 
\cite[Section 3]{DP_Simax}, and we plan to look at this problem in the future.

\medskip
%

\end{document}